\theoremstyle{plain}
\newtheorem{thm}{Theorem}[section]
\newtheorem{lem}[thm]{Lemma}
\newtheorem{prop}[thm]{Proposition}
\newtheorem{cor}[thm]{Corollary}
\newtheorem{rem}[thm]{Remark}
\theoremstyle{definition}
\newtheorem{exmp}[thm]{Example}
\newcommand{\la}{\lambda}
\newcommand{\tabincell}[2]{\begin{tabular}{@{}#1@{}}#2\end{tabular}}
\numberwithin{equation}{section} \errorcontextlines=0
\begin{document}
\title{Irreducible Characters and Bitrace for the $q$-Rook Monoid}
\author{Naihuan Jing}
\address{Department of Mathematics, North Carolina State University, Raleigh, NC 27695, USA}
\email{jing@ncsu.edu}
\author{Yu Wu}
\address{School of Mathematics, South China University of Technology,
Guangzhou, Guangdong 510640, China}
\email{wywymath@163.com}
\author{Ning Liu}
\address{Beijing International Center for Mathematical Research, Peking University, Beijing 100871, China}
\email{mathliu123@outlook.com}
%\thanks{{\scriptsize
%\hskip -0.4 true cm MSC (2010): Primary: 81P40; Secondary: 81Qxx.
%$*$Corresponding author, jing@ncsu.edu}}
\subjclass[2010]{Primary: 20C08; Secondary: 17B69, 05E10, 20C08}\keywords{$q$-rook monoid algebra, vertex operators,  Murnaghan-Nakayama rule, bitrace, Schur polynomials}
%\date{\today}
%\thanks{$*$Corresponding author: jing@ncsu.edu}

\begin{abstract}
This paper studies irreducible characters of the $q$-rook monoid
algebra $R_n(q)$ using the vertex algebraic method. Based on the Frobenius formula for $R_n(q)$,
a new iterative character formula is derived with the help of the vertex operator realization of the Schur symmetric function.
The same idea also leads to a simple proof of the Murnaghan-Nakayama rule for $R_n(q)$. We also introduce the bitrace for the $q$-rook monoid and derive its combinatorial formula as a generalization of the bitrace formula 
for the Iwahori-Hecke algebra. The character table of $R_n(q)$ with $|\mu|=5$ is listed in the appendix.
\end{abstract}
\maketitle

\tableofcontents

\section{Introduction}
The rook monoid $R_n$ or symmetric inverse monoid is the monoid of partial permutations, i.e. $n\times n$ permutation matrices with some nonzero entries changed to $0$. Solomon \cite{S1} introduced a $q$-analogue $R_n(q)$ of the group algebra of $R_n$ as an ``Iwahori-Hecke algebra" for the monoid of $n\times n$ matrices $M_n(\mathbb{F}_q)$ over the finite field $\mathbb{F}_q$ with $q$ elements. Its ``Borel subgroup" of invertible upper triangular matrices, i.e., the $q$-rook monoid $R_n (q)$ is a semisimple $\mathbb{C}(q)$-algebra that specializes to $\mathbb{C}[R_n]$ when $q\rightarrow1$. In \cite{S2}, Solomon also obtained a presentation of $R_n(q)$ by generators and relations and defined an action of $R_n(q)$ on $U^{\otimes n}$, where $U=L(\omega_1)\oplus L(0)$ is the ($r+1$)-dimensional representation of $U_{q}(gl_r)$. Here $L(\omega_1)$ is 
the $r$-dimensional fundamental representation  and $L(0)$ the trivial representation. Halverson \cite{H} gave a different presentation of $R_n(q)$ and showed that as an $(U_q(gl_r),R_n(q))$-bimodule
\begin{align*}
    U^{\otimes n}\cong\bigoplus_{\la\in\hat{R}_n}L(\la)\otimes R_n^{\la}(q)
\end{align*}
summed over all partitions $\la$ with no more than $n$ boxes and $r$ rows. Here $R_n^{\la}(q)$ are simple $R_n(q)$-modules and $L(\la)$ are simple $U_q(gl_r)$-modules. A third presentation provided by Halverson and Ram showed that $R_n(q)$ is a quotient of the affine Hecke algebra in type $A$.

In \cite{DHP}, Dieng, Halverson and Poladian developed the character theory of $R_n(q)$ as a generalization of the earlier work of Munn \cite{Mu}. They used a Schur-Weyl duality between $R_n(q)$ and the quantum general linear group $U_q(gl_r)$ to give a Frobenius formula, in the ring of symmetric functions, for the irreducible characters of $R_n(q)$. A recursive Murnaghan-Nakayama rule for these characters $\chi^{\la}_{\mu}(q)$ was also derived therein, which expresses $\chi^{\la}_{\mu}(q)$ in terms of those whose lower partitions have shorter length.

The aim of this paper is to find an effective and computation friendly formulae for $\chi^{\la}_{\mu}(q)$. We use vertex algebraic techniques to give a new iterative formula for $\chi^{\la}_{\mu}(q)$. Different from the Murnaghan-Nakayama rule, our new formula express $\chi^{\la}_{\mu}(q)$ in terms of those whose upper partitions have shorter length. Many special cases are considered. In particular, compact rules for $\chi^{\la}_{\mu}(q)$ with $\la$ hook and two-row type are derived respectively. 

According to \cite{HLR}, the bitrace can be viewed a deformed orthogonality relation for irreducible characters of the Hecke algebra. We also introduce the bitrace of the $q$-rook monoid algebra and derive a %as an analog of that for the Hecke algebra. 
parallel combinatorial formula.

The paper is organized as follows. In the next section, we will review briefly some essential preliminaries about symmetric functions, vertex operator realization of Schur functions and some structures of the $q$-rook monoid algebra. In Sec. \ref{s:formula}, we will give our main results: a new recursive formula for $\chi^{\la}_{\mu}(q)$ (Theorem \ref{X,X}) and the closed formulae for the cases of $\la$ being hook and two-row type (Theorem \ref{t:compact}). The Murnaghan-Nakayama rule is also redrived. Sec. \ref{s:bitrace} deals with the bitrace of the $q$-rook monoid algebra. A general combinatorial formula for the bitrace is deduced (Theorem \ref{t:bitrace}). In Appendix \ref{app}, we list a table of $\chi^{\la}_{\mu}(q)$ with $|\mu|=5$ using our formulae.

\section{Preliminaries}
In this section, first we present some well-known facts about symmetric functions \cite{Mac}, which will be used
in the subsequent sections. Then we will review briefly the vertex operator realization of Schur functions and the Frobenius formula for the $q$-rook monoid algebra.

\subsection{Partitions and symmetric functions} A {\it partition} $\lambda$ of $n>0$ is a sequence of non-negative integers $(\lambda_1,\lambda_2,\ldots)$ such that $\lambda_1\geqslant\lambda_2\geqslant\ldots$ and $\sum_i\la_i=|\la|=n$. We will call $\emptyset$ a partition of $0$. The nonzero $\lambda_i$ are called {\it parts} of $\lambda$, and $n$ is called the {\it weight} of $\la$ and denoted by $\la\vdash n$.
The number of nonzero parts is {\it length} $l(\la)$.  Sometimes $\lambda$ is arranged in the ascending order: $\lambda=(1^{m_1}2^{m_2}\ldots)$ with $m_i$ being the multiplicity of part $i$ in $\lambda$. The conjugate $\la'$ of a partition $\la$ is the partition defined by $\la^\prime_i={\rm Card}\{{j:\la_j\geqslant i}\}$ and $m_i=\la^\prime_i-\la^\prime_{i+1}.$

A {\it composition} $\tau$ of $n$ is a sequence of non-negative integers $(\tau_1,\ldots,\tau_l)$ satisfying $\sum_i{\tau_i}=n$, denoted by $\tau\models n$. Clearly, a partition is a composition. For any two compositions $\tau$ and $\rho$, we denote $\tau\subset\rho$ if $\tau_i\leqslant\rho_i$ for all $i\geqslant1$. We define the operations ``+" and ``-" for compositions (or partitions) by $\tau\pm\rho:=(\tau_1\pm\rho_1,\tau_2\pm\rho_2,\ldots)$. Here we add 0's at the tail of $\tau$ or $\rho$ such that they have the same number of components.

A partition $\la$ can be visualized by its Young diagram, which consists of
left adjusted array of boxes (also called nodes) with $\la_i$ boxes on the $i$th row.
For two partitions $\la\supset\mu$, the set-theoretic difference $\theta=\la/\mu$ is called a {\it skew diagram}. A skew diagram $\theta=\la/\mu$ is a {\it vertical} (resp. {\it horizontal} ) {\it $m$-strip} if $|\theta|=m$ and $\theta_i=\la_i-\mu_i\leqslant1$ (resp. $\theta_i^{'}=\la_i^{'}-\mu_i^{'}\leqslant1$) for each $i\geqslant1$. In other words, a vertical (resp. horizontal) strip has at most one square in each row (resp. column).

Let $\Lambda$ be the ring of symmetric functions in the $x_n$ ($n\in\mathbb N$) over $\mathbb{Q}$, and we will also study the ring $\Lambda_{\mathbb{Z}}$ as a lattice of $\Lambda$. The ring $\Lambda$ has several linear bases indexed by partitions. For each $r>1$, let $p_{r}=\sum x_{i}^{r}$ be the $r$th power-sum. Then $p_{\lambda}=p_{\lambda_{1}}p_{\lambda_{2}}\cdots p_{\lambda_{l}}, \lambda\in\mathcal P$ form a $\mathbb Q$-basis of $\Lambda$.
We introduce the modified power sum functions $\hat{p}_n=p_n(1,x_1,x_2,\ldots)=1+p_n$.

The ring $\Lambda$ also has various $\mathbb{Z}$-bases indexed by partitions: the set of monomial symmetric functions $m_{\la}=\sum x_{i_1}^{\la_1}\cdots x_{i_k}^{\la_k}$; that of elementary symmetric functions $e_{\la}=e_{\la_1}\cdots e_{\la_k}$ with $e_n=m_{(1^n)}$; that of complete symmetric function $h_{\la}=h_{\la_1}\cdots h_{\la_k}$ with $h_n=\sum_{\la\vdash n}m_{\la}$, and that of Schur symmetric functions $s_{\la}$.

The space $\Lambda$ is equipped with the Hall inner product defined by \cite[\S 1.4]{Mac})
\begin{align}\label{e:inner}
\langle p_{\lambda}, p_{\mu}\rangle=\delta_{\lambda\mu}z_{\lambda},
\end{align}
where $z_{\lambda}=\prod_{i\geq 1}\lambda_im_i(\lambda)!$, under which the Schur functions $s_{\lambda}$'s form an orthonormal basis of $\Lambda$.

The function $p_n$ acts on $\Lambda$ as a multiplication operator, and its adjoint operator is the differential operator $p_n^* =n\frac{\partial}{\partial p_n}$. Note that * is $\mathbb Q$-linear and  anti-involutive satisfying
\begin{equation}
\langle p_nu, v\rangle=\langle u, p_n^*v\rangle
\end{equation}
for $u, v\in \Lambda$.

Using the degree gradation, $\Lambda$ becomes a graded ring
\begin{align}
\Lambda=\bigoplus_{n=0}^{\infty} \Lambda_n.
\end{align}
For a given operator $A$ on $\Lambda$, we say $A$ is of degree $n$ if $A$ maps a polynomial of degree $m$ to a polynomial of degree $m+n$, i.e., $A(\Lambda_m)\subset \Lambda_{m+n}$. Thus, $p_n$ (resp. $p_n^*$) is of degree $n$ (resp. $-n$) as the multiplication (resp. differential) operator on $\Lambda$.
\subsection{Vertex operator realization of Schur functions}
We recall the vertex operator realization of Schur functions. Let $S(z)$ be the map: $\Lambda\longrightarrow \Lambda[[z, z^{-1}]]$ defined by (see \cite{Jing2})
\begin{align}\label{e:Schurop}
S(z)&=\mbox{exp} \left( \sum\limits_{n\geq 1} \dfrac{1}{n}p_nz^{n} \right) \mbox{exp} \left( -\sum \limits_{n\geq 1} \frac{\partial}{\partial p_n}z^{-n} \right)=\sum_{n\in\mathbb Z}S_nz^{n}.
\end{align}

Note that $p_n^* =n\frac{\partial}{\partial p_n}$, we have the dual or adjoint operator of $S(z)$:
\begin{align}
S^*(z)&=\exp\left(-\sum_{n\geqslant1}^\infty \frac{1}{n}p_nz^n\right)\exp\left(\sum_{n\geqslant1}^\infty \dfrac{\partial}{\partial p_n}z^{-n}\right)=\sum_{n\in\mathbb{Z}}S_n^*z^{-n}.
\end{align}

We use the convention to index the components by their degrees.
The operators $S_n \in \mathrm{End}(\Lambda)$ are the Bernstein vertex operators realizing the Schur functions.

\begin{prop}\label{p:vo}\cite{Jing1,Jing2}
(1)  The components of $S(z)$ and $S^*(z)$ obey the following commutation relations:
    \begin{align}
        S_mS_n+S_{n-1}S_{m+1}&=0,\\
        S_m^*S_n^*+S_{n+1}^*S_{m-1}^*&=0,\\
        S_mS_n^*+S_{n-1}^*S_{m-1}&=\delta_{m,n}.
    \end{align}

(2) For any composition $\mu=(\mu_{1},\ldots,\mu_{k})$, the product $S_{\mu_{1}}\cdots S_{\mu_{k}}.1=s_{\mu}$ is the Schur function labeled by $\mu$. In general, $s_{\mu}=0$ or $\pm s_{\lambda}$ for a partition $\lambda$ such that $\la=\sigma(\mu+\delta)-\delta$ for some $\sigma\in\mathfrak S_k$. Here $\delta=(k-1,k-2,\ldots,1,0)$. Moreover, $S_{-n}.1=\delta_{n,0}, S^{*}_{n}.1=\delta_{n,0}, (n\geq0)$.
\end{prop}

\begin{exmp}
    Here we provide an explicit example to illustrate Proposition \ref{p:vo} (2). We choose $\mu=(2,1)$. Then 
    \begin{align*}
        &S_{(2,1)}.1=S_2S_1.1=S_2p_1\\
        =&\left(\frac{1}{2}p_2+\frac{1}{2}p_1p_1+\left(\frac{1}{3}p_3+\frac{1}{2}p_2p_1+\frac{1}{6}p_1^3\right)\left(-\frac{\partial}{\partial p_1}\right)+\text{\small terms with differential operators of degree $>1$ }\right)p_1.
\end{align*}
Note that $p_1$ will be killed by the terms with differential operators of degree $>1$. Thus,
\begin{align*}
         S_{(2,1)}.1=&\frac{1}{2}p_2p_1+\frac{1}{2}p_1^3-\frac{1}{3}p_3-\frac{1}{2}p_2p_1-\frac{1}{6}p_1^3\\
        =&\frac{1}{3}p_1^3-\frac{1}{3}p_3\\
        =&s_{(2,1)}.
    \end{align*}
    
\end{exmp}

The special Schur function $h_n=S_n.1$ is called the complete homogeneous symmetric function, whose generating function is given by
\begin{align}
\exp\left( \sum\limits_{n\geq 1} \frac{1}{n}p_nz^{n} \right)=\prod_{i\geq1}\frac{1}{1-x_i z}=\sum_{n\geq0}h_nz^n.
\end{align}
Similarly, its dual operator can be written as
\begin{align}
\exp\left( \sum\limits_{n\geq 1} \frac{\partial}{\partial p_n}z^{-n} \right)=\sum_{n\geq0}h^*_nz^{-n}.
\end{align}

Introduce the symmetric functions $q_n(t)$ defined by its generating function:
\begin{align}\label{e:qfcn1}
\exp\left(\sum_{n=1}^{\infty}\frac{1-t^n}np_nz^n\right)=\sum_{n=0}^{\infty}q_n(t)z^n=q(z;t)
\end{align}
where $q_n(t)$ is the Hall-Littlewood polynomial of one-row partition $(n)$, and clearly
\begin{align}\label{e:qfcn2}
q_n(t)=\sum_{\lambda\vdash n}\frac1{z_{\lambda}(t)}p_{\lambda}.
\end{align}
Here $z_{\lambda}(t)=\frac{z_{\lambda}}{\prod_{i\geq 1}(1-t^{\lambda_i})}$.
For each partition $\lambda$, denote $q_{\lambda}(t)=q_{\lambda_1}(t)q_{\lambda_2}(t)\cdots$, then
$\{q_{\lambda}(t)\}$ forms a basis of $\Lambda\otimes\mathbb{Q}(t)$.

Recall that $\hat{p}_n=p_n(1,x_1,x_2,\cdots)$. We introduce a modification $\hat{q}_n(t)$ of $q_n(t)$ as follows:
\begin{align}
\exp\left(\sum_{n=1}^{\infty}\frac{1-t^n}n\hat{p}_nz^n\right)=\sum_{n=0}^{\infty}\hat{q}_n(t)z^n=\hat{q}(z;t).
\end{align}
Similarly, we have $\hat{q}_n(t)=\sum_{\lambda\vdash n}\frac1{z_{\lambda}(t)}\hat{p}_{\lambda}.$ For a partition $\lambda$, denote $\hat{q}_{\lambda}(t)=\hat{q}_{\lambda_1}(t)\hat{q}_{\lambda_2}(t)\cdots$.

\begin{lem}\label{t:relq^}
The relation between $\hat{q}_{\la}(t)$ and $q_{\la}(t)$ is given by
\begin{align}\label{e:relq^}
\hat{q}_{\la}(t)=\sum_{\tau}(1-t)^{l(\tau)}q_{\la-\tau}(t)
\end{align}
summed over all compositions $\tau$ such that $\tau\subset\la$. Here $\la-\tau=(\la_1-\tau_1,\la_2-\tau_2,\cdots)$. In particular, $\hat{q}_n(t)=q_n(t)+(1-t)\sum_{i\geqslant1}^n q_{n-i}(t).$
\end{lem}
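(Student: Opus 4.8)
The plan is to reduce everything to the defining generating functions and to exploit the single relation $\hat{p}_n=1+p_n$. First I would compare the exponents appearing in the definitions of $\hat{q}(z;t)$ and $q(z;t)$. Since $\hat{p}_n=1+p_n$, the exponent for $\hat{q}$ exceeds that for $q$ by exactly the scalar series $\sum_{n\geqslant1}\frac{1-t^n}{n}z^n$, so the two generating functions factor as
\[
\hat{q}(z;t)=\exp\!\Big(\sum_{n\geqslant1}\frac{1-t^n}{n}z^n\Big)\,q(z;t).
\]

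Next I would evaluate the scalar prefactor in closed form. Splitting the sum and summing the two resulting logarithmic series gives $\sum_{n\geqslant1}\frac{1-t^n}{n}z^n=\log\frac{1-tz}{1-z}$, so the prefactor equals $\frac{1-tz}{1-z}$. Writing $\frac{1-tz}{1-z}=1+(1-t)\frac{z}{1-z}$ and expanding the geometric series yields $\frac{1-tz}{1-z}=1+(1-t)\sum_{j\geqslant1}z^j$. Comparing the coefficient of $z^n$ on both sides of the factorization then produces the one-row identity $\hat{q}_n(t)=q_n(t)+(1-t)\sum_{i=1}^n q_{n-i}(t)$, which is precisely the ``in particular'' statement.

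Finally I would obtain the general partition formula by multiplicativity. Writing $\hat{q}_{\la}(t)=\prod_i \hat{q}_{\la_i}(t)$ and substituting the one-row identity in the form $\hat{q}_{\la_i}(t)=\sum_{\tau_i=0}^{\la_i}a_{\tau_i}\,q_{\la_i-\tau_i}(t)$, where $a_0=1$ and $a_s=1-t$ for $s\geqslant1$, I would expand the product. Each term is indexed by a choice of $\tau_i\in\{0,1,\ldots,\la_i\}$ for every $i$, that is, by a composition $\tau\subset\la$; its scalar coefficient is $\prod_i a_{\tau_i}=(1-t)^{\#\{i:\tau_i>0\}}$ and its symmetric-function factor is $\prod_i q_{\la_i-\tau_i}(t)=q_{\la-\tau}(t)$. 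Collecting the terms gives $\hat{q}_{\la}(t)=\sum_{\tau\subset\la}(1-t)^{l(\tau)}q_{\la-\tau}(t)$.

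The only delicate point is the bookkeeping in this last step: one must identify the exponent $l(\tau)$ with the number of indices at which $\tau$ is strictly positive, which amounts to adopting the convention that $l(\tau)$ counts only the nonzero parts of the composition $\tau$ (trailing and interior zeros contributing a factor $a_0=1$). Everything else reduces to the elementary logarithmic identity and a geometric-series expansion, so I expect no genuine analytic difficulty; the proof is essentially a clean separation of the ``extra $1$'' in $\hat{p}_n$ from the power sums.
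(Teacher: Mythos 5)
Your proposal is correct and follows essentially the same route as the paper: both proofs factor the generating function via $\hat{p}_n=1+p_n$ into $q(z;t)$ times the scalar series $\exp\bigl(\sum_{n\geqslant1}\frac{1-t^n}{n}z^n\bigr)=1+(1-t)\sum_{i\geqslant1}z^i$ and compare coefficients of $z^n$ to get the one-row identity. The only cosmetic difference is at the end, where the paper passes to general $\la$ ``by induction'' while you expand the product $\prod_i\hat{q}_{\la_i}(t)$ explicitly --- the same argument, with your version making the bookkeeping (in particular that $l(\tau)$ counts only the nonzero parts of $\tau$) pleasantly explicit.
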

\begin{proof}
By definition it follows that
\begin{align*}
\sum_{n=0}^{\infty}\hat{q}_n(t)z^n&=\exp\left(\sum_{n=1}^{\infty}\frac{1-t^n}n\hat{p}_nz^n\right)\\
&=\exp\left(\sum_{n=1}^{\infty}\frac{1-t^n}np_nz^n\right)\exp\left(\sum_{n=1}^{\infty}\frac{1-t^n}nz^n\right)\\
&=\left(\sum_{n=0}^{\infty}q_n(t)z^n\right)\left(1+(1-t)\sum_{i\geqslant1}^\infty z^i\right).
\end{align*}
Comparing the coefficient of $z^n$ on both sides gives $\hat{q}_n(t)=q_n(t)+(1-t)\sum_{i\geqslant1}^n q_{n-i}(t)$, which yields \eqref{e:relq^} by induction.
\end{proof}

\subsection{Irreducible characters for the $q$-rook monoid algebra}

Let $q$ be an indeterminate. For $n\geq2$, the $q$-rook monoid $R_n(q)$ is defined as the associative $\mathbb{C}(q)$-algebra with generators $1, T_1,\cdots, T_{n-1}$,\\$ P_1, \cdots , P_n$ subject to the relations \cite{H}:
\begin{align*}
(T_i-q)(T_i+1)=0, \quad &1\leq i\leq n-1,\\
T_iT_j=T_jT_i, \quad &1\leq i,j\leq n-1, \mid i-j\mid>1,\\
T_iT_{i+1}T_i=T_{i+1}T_iT_{i+1}, \quad &1\leq i\leq n-2,\\
T_iP_j=P_jT_i=qP_j, \quad &1\leq i< j\leq n,\\
T_iP_j=P_jT_i, \quad &1\leq j< i\leq n-1,\\
P^2_{i}=P_i, \quad &1\leq i\leq n,\\
P_{i+1}=qP_iT^{-1}_{i}P_i, \quad &2\leq i \leq n.
\end{align*}

Define $R_0(q) = \mathbb{C}(q)$, and $R_1(q)$ is the associative $\mathbb{C}(q)$-algebra spanned by $1$ and
$P_1$ subject to $P^2_1 = P_1$. The subalgebra of $R_n(q)$ generated by $T_1,\cdots, T_{n-1}$ is isomorphic
to the Iwahori-Hecke algebra $H_n(q)$ in type $A$. Here we adopt Halverson's presentation of $R_{n}(q)$ \cite{H} for our purposes.

Let $s_i\in\mathfrak S_n$ be the transposition of $i$ and $i+1$, we define $\gamma_1=T_{\gamma_1}=1$ and for $2\leq t\leq n$,
\begin{align*}
    \gamma_t&=s_1s_2\cdots s_{t-1},\\
    T_{\gamma_t}&=T_1\cdots T_{t-1}.
\end{align*}
For a composition $\mu=(\mu_1,\ldots,\mu_l)\models k,\;0\leq k\leq n$, we define
\begin{align*}
    T_{\gamma_\mu}&=T_{\gamma_{\mu_1}}\otimes\ldots\otimes T_{\gamma_{\mu_l}},\\
    T_{\mu}&=P_{n-k}\otimes T_{\gamma_{\mu}},
\end{align*}
where the tensor product means that the factors $T_{\gamma_{\mu_i}}$ are generated by $T_j$ with $j=\sum_{a=1}^{j-1} \mu_a+1, \sum_{a=1}^{j-1} \mu_a+2,
\ldots, \sum_{a=1}^{j} \mu_a$ (disjoint indices).
Here $T_\mu\in R_{n-k}(q)\otimes R_{\mu_1}(q)\otimes\ldots\otimes R_{\mu_l}(q)\subset R_n(q)$ is called the {\it standard element}. It is clear that $T_{\mu}$ is the identity when $\mu=(1^n)$.

If $K\subset\{1,2,\ldots,n\}$ define the subgroup $\mathfrak S_K\subset\mathfrak S_n$ as the group of permutations on the elements of $K$. For $1\leq i\leq n$,  we define $T_{i,i}=1$, and define for $1\leq i<j\leq n$
\begin{align}
    T_{i,j}=T_{j-1}T_{j-2}\ldots T_i.
\end{align}
Let $A=\{a_1,a_2,\ldots,a_k\}\subset \{1,2,\ldots,n\}$ and assume that $a_1<a_2<\ldots<a_k$. Define
\begin{align}
    T_A=T_{1,a_1}T_{2,a_2}\ldots T_{k,a_k}.
\end{align}

For $0\leq k\leq n$, let $\Omega_k$ be the following set of triples,
\begin{align*}
    \Omega_k=\left\{(A,B,\omega)\Bigg|\begin{array}{lc}A,B\subset\{1,2,\ldots,n\}, |A|=|B|=k,\\\omega\in\mathfrak S_{\{k+1,\ldots,n\}}\end{array}\right\}
\end{align*}
and let $\Omega=\bigcup_{k=0}^n\Omega_k$. If we define {\it the standard words}
\begin{align}
T_{(A,B,\omega)}=T_AT_\omega P_kT_B^{-1}, (A,B,\omega)\in\Omega_k.
\end{align}
Then the set $\{T_{(A,B,\omega)}|(A,B,\omega\in\Omega)\}$ forms a linear $\mathbb{C}(q)$-basis of $R_n(q)$ \cite{H}.

It is known that irreducible characters of $R_n(q)$ are parameterized by partitions of $k\leq n, k=0, 1, \ldots, n$. Here a partition of $0$ means $\emptyset$.
The irreducible module $V^{\la}$ indexed by $\la\vdash k\leq n$ have been studied
in \cite{G, Mu, S1b}, and the seminormal bases of $V^{\la}$ were found in \cite{H} (which will be recalled in the next section). %Representations of $R_n(q)$ have been studied in
Dieng, Halverson and Poladian \cite{DHP} used the Schur-Weyl
duality to prove the following Frobenius-type formula in the ring of symmetric functions ($\mu\vdash n$)
\begin{align}\label{chara formula}
\frac{q^{|\mu|}}{(q-1)^{l(\mu)}}\hat{q}_{\mu}(q^{-1})=\sum_{k=0}^{n}\sum_{\la\vdash k}\chi^{\la}_{R_n(q)}(T_{\mu})s_{\la}
\end{align}
where $\chi^{\la}_{R_n(q)}(T_{\mu})$ is the value of the irreducible characters of $R_n(q)$ indexed by $\la\vdash k (\leq n)$ at the standard element $T_{\mu}$.
In the following we sometimes write $\chi^{\la}_{\mu}(q)$ for $\chi^{\la}_{R_n(q)}(T_{\mu})$ for brevity.
Formula \eqref{chara formula} is a generalization of the Frobenius formula given by Ram \cite{Ram} for the Iwahori-Hecke algebra $H_n(q)$ in type $A$. The irreducible
$R_n(q)$-characters are completely determined by their values on the set of standard elements $T_{\mu}, \mu\vdash k, 0 \leq k \leq n.$ Specifically, the character table of $R_n(q)$ is block upper triangular with the diagonal blocks being the character tables of the Iwahori-Hecke algebras $H_k(q)$ in type $A$, $k\leq n$.

By the  orthonormality and the vertex operator realization of $s_{\la}$,
\begin{align*}
\chi^{\la}_\mu(q)=\frac{q^{|\mu|}}{(q-1)^{l(\mu)}}\langle \hat{q}_\mu(q^{-1}), s_{\la}\rangle=\frac{q^{|\mu|}}{(q-1)^{l(\mu)}}\langle \hat{q}_\mu(q^{-1}), S_{\la}.1\rangle.
\end{align*}
Denote $X_\mu^\lambda(t)=\langle \hat{q}_\mu(t), S_\lambda.1\rangle$, then $\chi^{\la}_\mu(q)=\frac{q^{|\mu|}}{(q-1)^{l(\mu)}}X_\mu^\lambda(q^{-1})$.
We are going to compute $X_\mu^\lambda(t)$ in the sequel.

\section{Irreducible characters}\label{s:formula}
In this section, we will give a new iterative formula, which implies compact formulae for some special cases. Moreover, we rederive the Murnaghan-Nakayama rule based on the vertex operator realization of Schur functions and the Frobenius formula.

It is known that the Young seminormal representation \cite{Hoe} of the Hecke algebra $H_n(q)$ can be extended to
that of $R_n(q)$ \cite{H}. For $\la\vdash k$ ($k\leq n$), an $n$-standard tableau $L$ of shape $\la$ is a filling of the diagram of $\la$ with numbers from $\{1,2,\ldots,n\}$ such that (1) each number appears at most once, (2) the entries in each column strictly increase from top to bottom, and (3) the entries in each row strictly increase from left to right.

If $b$ is a box of $L$ in position $(i,j)\in\la$, the {\it content} of $b$ is defined as
\begin{align}
c(b)=j-i.
\end{align}
For $i\in\{1, \ldots, n\}$, we sometimes call $i\in L$ if $i$ appears inside $L$, and refer $L(i)$ as the box of $L$ where $i$ stands.

The number $f_{\la}$ of $n$-standard tableaux of shape $\la\vdash n$ is given by
$f_{\la}={n!}/\prod_{(i,j)\in\la}h_{ij}$, where $h_{ij}=\la_i+\la_j'-i-j+1$ is called the {\em hook-length} of $\la$ at $(i,j)\in\la$. Subsequently
%Thus if $\la\vdash k$ and $n\geq k$,
the number of $n$-standard tableaux of shape $\la\vdash k$ ($k\leq n$) is given by $\binom{n}{k}f_{\la}$.

Let $T_n^\lambda$ be the set of $n$-standard tableaux of shape $\la\vdash k$ ($k\leq n$), and define
$V^{\la}$ as the $\mathbb{C}(q^{1/2})$-vector space spanned by $v_L, L\in T_n^{\lambda}$. We
define an action of
$R_n(q)$ on $V^{\lambda}$ as follows: $P_iv_L=v_i$ for $i\notin L$ otherwise $P_iv_L=0$ and
\begin{align}
T_iv_L=\begin{cases}
\dfrac{q-1}{1-q^{c(L(i))-c(L(i+1))}}v_L+(1+\dfrac{q-1}{1-q^{c(L(i))-c(L(i+1))}})v_{L^\prime},&\mbox{if $i,i+1\in L$},\\
(q-1)v_L+q^{1/2}v_{s_iL},\quad&\mbox{if $i\notin L$, $i+1\in L$},\\
q^{1/2}v_{s_iL},\quad\quad&\mbox{if $i\in L$, $i+1\notin L$},\\
qv_L,\quad\quad&\mbox{if $i,i+1\notin L$},
\end{cases}
\end{align}
where
\begin{align}
v_{L^\prime}=\begin{cases}v_{s_iL},&\mbox{if $s_iL$ is $n$-standard},\\
                          0,\quad&\mbox{otherwise}.
\end{cases}
\end{align}

%Define an action of $P_i, 1\leq i\leq n$, on $V^\lambda$ by
%\begin{align}
%P_iv_L=\begin{cases}v_L,&\mbox{if $1,2,\ldots,i\notin L$},\\
%                    0,\quad&\mbox{otherwise}.
%\end{cases}
%\end{align}
%Clearly if $i,i+1\in L$, the action of $T_i$ on $v_L$ is the same as that of Hoefsmit`s seminormal representation \cite{Hoe} for the Iwahori-Hecke algebra $H_n(q)$.

\begin{thm}\cite{H}
For each $\la\vdash k\leq n$, the above $R_n(q)$-action on $V^\lambda$ affords an irreducible representation of $R_n(q)$. Moveover, the set $V^\lambda, \la\vdash k\leq n$, is a complete set of irreducible, pairwise non-isomorphic $R_n(q)-$modules.
\end{thm}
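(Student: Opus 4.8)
The plan is to establish the statement in two stages: first, that the displayed formulas for the $P_i$ and $T_i$ genuinely define an $R_n(q)$-module structure on $V^\la$ that is irreducible, and second, that as $\la$ ranges over all partitions of $k\leq n$ the modules $V^\la$ exhaust the simple modules up to isomorphism without repetition. For the first stage I would verify directly that the assigned operators satisfy each defining relation of $R_n(q)$ on every basis vector $v_L$, organizing the computation by cases according to which of the adjacent entries $i,i+1$ lie in $L$. When both $i$ and $i+1$ belong to $L$, the action of $T_i$ coincides with Young's $q$-seminormal representation of the Iwahori--Hecke algebra $H_n(q)$, so the quadratic relation $(T_i-q)(T_i+1)=0$, the far commutation $T_iT_j=T_jT_i$ for $|i-j|>1$, and the braid relation $T_iT_{i+1}T_i=T_{i+1}T_iT_{i+1}$ are inherited from the classical calculation; the key input there is the content identity governing the blocks spanned by tableaux differing by the transposition $s_i$. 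The remaining cases, where exactly one or neither of $i,i+1$ appears in $L$, must be checked by hand from the explicit scalars $q-1$, $q^{1/2}$, and $q$, but these are short since the operators then act diagonally or as simple permutations of tableaux.

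The genuinely new relations to confirm are those coupling the $T_i$ with the idempotents $P_j$. The easy ones, $P_i^2=P_i$ and $T_iP_j=P_jT_i=qP_j$ for $i<j$ and $T_iP_j=P_jT_i$ for $j<i$, follow at once from the rule $P_jv_L=v_L$ when $j\notin L$ and $P_jv_L=0$ otherwise, combined with the observation that acting by $T_i$ does not alter whether a given entry $>\max(i,i+1)$ occurs in $L$. I expect the main obstacle to be the relation $P_{i+1}=qP_iT_i^{-1}P_i$: here one must first extract $T_i^{-1}=q^{-1}\bigl(T_i-(q-1)\bigr)$ from the quadratic relation, then evaluate the composite on each $v_L$ while carefully tracking membership of $i$ and $i+1$ and the contribution of the off-diagonal term $v_{s_iL}$. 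This is the step where the interplay between the ``missing entry'' cases and the seminormal scalars is most delicate, and where a stray power of $q^{1/2}$ or a sign is easiest to mishandle.

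For irreducibility I would exploit that the $v_L$ are simultaneous eigenvectors of a commuting family built from Jucys--Murphy-type elements together with the idempotents $P_i$: the former record the contents of the boxes occupied by the entries of $L$, while the latter detect precisely which of $1,\ldots,n$ are absent from $L$. Since distinct $n$-standard tableaux of a fixed shape $\la$ are separated by this combined data, the associated spectral projections isolate each $v_L$; the off-diagonal part of the $T_i$-action then moves $v_L$ to $v_{s_iL}$ whenever $s_iL$ is again $n$-standard. As $T_n^\la$ is connected under these elementary moves, any nonzero submodule must contain some $v_L$ and hence all of them, so $V^\la$ is simple.

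Finally, for distinctness and completeness: two shapes $\la\neq\mu$ yield non-isomorphic modules because their content spectra, equivalently their restrictions to the commutative subalgebra above, differ. For completeness I would invoke the semisimplicity of $R_n(q)$ together with a dimension count. Since $\dim V^\la=\binom{n}{k}f_\la$ for $\la\vdash k$ and $\sum_{\la\vdash k}f_\la^2=k!$, one obtains $\sum_{k=0}^n\sum_{\la\vdash k}(\dim V^\la)^2=\sum_{k=0}^n\binom{n}{k}^2k!$, which equals $\dim R_n(q)$, the cardinality of the rook monoid $R_n$ (choose a domain and a range of size $k$ and a bijection between them). Matching the sum of squared dimensions to $\dim R_n(q)$ forces $\{V^\la\}_{\la\vdash k\leq n}$ to be a complete set of pairwise non-isomorphic simple $R_n(q)$-modules.
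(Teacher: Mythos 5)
The paper offers no proof of this statement---it is quoted verbatim from Halverson \cite{H}---so your proposal can only be measured against the source. Your overall architecture is the standard one: direct verification of the defining relations, irreducibility via a separating commutative family plus connectedness of $T_n^\lambda$ under the moves $v_L\mapsto v_{s_iL}$, and completeness via semisimplicity and the dimension count $\sum_{k=0}^n\sum_{\lambda\vdash k}\bigl(\tbinom{n}{k}f_\lambda\bigr)^2=\sum_{k=0}^n\tbinom{n}{k}^2k!=\dim R_n(q)$. The count itself is correct (it matches the basis $\{T_{(A,B,\omega)}\}$, since $\sum_k\binom{n}{k}^2(n-k)!=\sum_k\binom{n}{k}^2k!$), and it is worth noting that the equality of the sum of squared dimensions with $\dim R_n(q)$ even handles the non-algebraically-closed ground field: it forces every Wedderburn division algebra to be trivial, so completeness follows without assuming splitness.

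The genuine gap is in the step you dismiss as immediate. You adopt at face value the rule ``$P_jv_L=v_L$ if $j\notin L$, else $0$'' and assert that $T_iP_j=P_jT_i=qP_j$ for $i<j$ then ``follows at once.'' Under that literal rule the relation is \emph{false}: take $n=2$, $\lambda=(1)$, and $L$ the tableau containing the entry $1$. Then $2\notin L$, so $P_2v_L=v_L$, while $T_1v_L=q^{1/2}v_{s_1L}$ (the case $1\in L$, $2\notin L$), so $T_1P_2v_L=q^{1/2}v_{s_1L}\neq qv_L=qP_2v_L$. Your supporting observation---that $T_i$ does not change whether an entry $>\max(i,i+1)$ occurs in $L$---only yields the commutation $T_iP_j=P_jT_i$, not the eigenvalue condition that $T_i$ act by the scalar $q$ on the image of $P_j$. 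The correct seminormal action in \cite{H} is cumulative: $P_jv_L=v_L$ precisely when \emph{none} of $1,2,\ldots,j$ appears in $L$, and $P_jv_L=0$ otherwise (the paper's stated rule is a typo). With that rule, every $v_L$ in the image of $P_j$ has $i,i+1\notin L$ for all $i<j$, so $T_iv_L=qv_L$ and the relation holds. This correction propagates into your irreducibility argument: the cumulative projections $P_1,\ldots,P_n$ detect only the minimal entry of $L$, not ``precisely which of $1,\ldots,n$ are absent,'' so the commutative family you propose does not separate the basis as claimed and must be enlarged (e.g.\ by suitable conjugates of the $P_k$, or Jucys--Murphy-type elements of $R_n(q)$ itself) before the spectral-projection-plus-connectivity argument closes. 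The connectivity step and the final dimension count are sound as written; one should only add that everything takes place over $\mathbb{C}(q^{1/2})$ because of the scalars $q^{1/2}$ in the action.
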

In the following we will simply use $\chi^{\la}$ to denote the irreducible character of $V^{\la}$.

\subsection{A new iterative formula}For each partition $\la=(\la_1, \ldots, \la_l)$, we define that
\begin{align}
\la^{[i]}=(\la_{i+1}, \cdots, \la_l), \qquad i=0, 1, \ldots, l
\end{align}
So $\la^{[0]}=\la$ and $\la^{[l]}=\emptyset$.

In \cite[Sec. 5]{JL2}, two of us obtained the following decomposition of Schur functions. For an arbitrary partition $\la=(\la_1,\la_2,\cdots,\la_l)\vdash n$, then
\begin{align}\label{e:decomposition}
s_{\la}=\sum_{\nu}(-1)^{n-|\nu|-\la_1}h_{n-|\nu|}s_{\nu}
\end{align}
summed over all partitions $\nu\subset\la^{[1]}$ such that $\la^{[1]}/\nu$ is a vertical strip.

\begin{prop}\label{t:h*q}
Let $k$ be a nonnegative integer and $\mu=(\mu_1,\mu_2,\cdots,\mu_m)\vdash n$ be a partition, then we have
\begin{align}\label{e:h*q}
h^{*}_{k}\hat{q}_{\mu}.1=\sum_{\tau\in C(\mu;k)}(1-t)^{l(\tau)}\hat{q}_{\mu-\tau}.1,
\end{align}
where $C(\mu;k):=\{\tau\subset\mu\mid \tau\models k\}$.
\end{prop}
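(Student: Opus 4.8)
The plan is to compute the action of the adjoint operator $h_k^*$ directly on the vector $\hat{q}_\mu.1$ by exploiting the generating-function definitions of both operators and the commutation relation between $h_k^*$ and the exponential of power sums that produces $\hat{q}_\mu$. Recall that $\hat{q}_\mu.1 = \hat{q}_{\mu_1}(t)\cdots\hat{q}_{\mu_m}(t)$ is an actual symmetric function (multiplication by the scalar $1$), and that $h_k^*$ is the degree $-k$ component of $\exp\left(\sum_{n\geq1}\frac{\partial}{\partial p_n}z^{-n}\right)$. Since each $\hat{q}_n(t)=\sum_{\lambda\vdash n}\frac{1}{z_\lambda(t)}\hat{p}_\lambda$ is built from the modified power sums $\hat{p}_n = 1+p_n$, and the derivative operators $\frac{\partial}{\partial p_n}$ annihilate constants, the operator $h_k^*$ effectively differentiates only through the genuine $p_n$-content. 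First I would package the whole product $\hat{q}_\mu.1$ via its generating function, writing it as the coefficient extraction from $\prod_{j=1}^m \hat{q}(z_j;t) = \prod_{j=1}^m \exp\left(\sum_{n\geq1}\frac{1-t^n}{n}\hat{p}_n z_j^n\right)$.

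The key computational step is to commute $\exp\left(\sum_{n\geq1}\frac{\partial}{\partial p_n}w^{-n}\right)$ past $\exp\left(\sum_{n\geq1}\frac{1-t^n}{n}p_n z^n\right)$. Because $\left[\frac{\partial}{\partial p_n}, p_n\right]=1$, this is a standard Heisenberg-type normal-ordering computation: the two exponentials commute up to a scalar contraction factor. Specifically, moving the $w^{-n}$-exponential to the right past the $z^n$-exponential yields a multiplicative correction $\exp\left(\sum_{n\geq1}\frac{1-t^n}{n}(z/w)^n\right)=\prod_{j}(1-(z_j/w)t)/(1-z_j/w)$ in the combined generating variables, after which the derivative operators act on the constant $1$ and contribute nothing further. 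Reading off the coefficient of $w^{-k}$ in this scalar correction, then re-expanding, should reproduce precisely the factor $(1-t)^{l(\tau)}$ attached to each way of lowering the exponents, i.e. each composition $\tau\subset\mu$ with $\tau\models k$; the surviving generating function in the $z_j$ is then exactly $\prod_j \hat{q}(z_j;t)$ with shifted coefficient extraction, which is $\hat{q}_{\mu-\tau}.1$.

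Concretely, I expect the correction factor to produce, for each index $j$, a geometric-type contribution whose coefficient of $z_j^{\tau_j}$ carries a factor that collapses to $(1-t)$ whenever $\tau_j\geq1$ and to $1$ whenever $\tau_j=0$ — mirroring exactly the structure already seen in Lemma~\ref{t:relq^}, where $\hat{q}_n(t)=q_n(t)+(1-t)\sum_{i\geq1}^n q_{n-i}(t)$. Indeed the single-variable case $m=1$ of the proposition is essentially a restatement of that lemma's mechanism applied to $h_k^*$, and I would verify it first as a sanity check before assembling the multi-variable product. The cleanest route may be to avoid generating functions entirely at the level of bookkeeping and instead argue by induction on $m$: apply $h_k^*$ as a "coproduct-like" operator via the identity $h_k^*(fg)=\sum_{a+b=k}(h_a^* f)(h_b^* g)$ (the adjoint of multiplication behaving as a derivation-of-Hopf-algebra type rule), reducing the multi-part statement to the single-part base case governed by Lemma~\ref{t:relq^}.

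The main obstacle will be justifying the interchange of the two exponential operators rigorously and, more delicately, confirming that the combinatorial bookkeeping of the correction factor yields precisely $(1-t)^{l(\tau)}$ summed over $\{\tau\subset\mu\mid\tau\models k\}$ rather than some over- or under-counted set — in particular, ensuring that the constraint $\tau\subset\mu$ (each $\tau_j\leq\mu_j$, so that $\hat{q}_{\mu-\tau}.1$ is well-defined with nonnegative exponents) emerges naturally from the coefficient extraction rather than having to be imposed by hand. I would resolve this by tracking the exponent of each $z_j$ carefully, noting that $\hat{q}_{\mu_j-\tau_j}(t)=0$ is not the issue, but rather that $\tau_j>\mu_j$ simply contributes no term to the coefficient of $z_j^{\mu_j}$, so the admissible range $0\leq\tau_j\leq\mu_j$ is forced automatically.
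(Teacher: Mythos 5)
Your proposal is correct and takes essentially the same route as the paper: you commute $\exp\left(\sum_{n\geq1}\frac{\partial}{\partial p_n}w^{-n}\right)$ past the generating exponentials of the $\hat{q}$'s using $\left[\frac{\partial}{\partial p_n},\hat{p}_m\right]=\delta_{n,m}$, obtain the same contraction factor $\frac{1-tz_j/w}{1-z_j/w}=1+(1-t)\sum_{l\geq1}(z_j/w)^{l}$ per variable, and extract the coefficient of $w^{-k}z_1^{\mu_1}\cdots z_m^{\mu_m}$ to produce $(1-t)^{l(\tau)}$ summed over $\tau\in C(\mu;k)$, which is exactly the paper's proof of Proposition \ref{t:h*q}. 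The coproduct-plus-induction variant you mention at the end would also work, but the normal-ordering argument you develop first is precisely the published one, including your observation that the constraint $\tau\subset\mu$ is enforced automatically by the coefficient extraction.
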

\begin{proof} Note that $[\frac{\partial}{\partial p_{n}}, \hat{p}_m]=\delta_{n,m}$. The usual vertex operator calculus gives that
\begin{align*}
\mbox{exp} \left( \sum\limits_{n=1}^{\infty}\frac{\partial}{\partial p_{n}}z^{-n} \right)\mbox{exp} \left( \sum\limits_{n=1}^{\infty}\frac{1-t^{n}}{n}\hat{p}_{n}w^{n} \right)=\mbox{exp} \left( \sum\limits_{n=1}^{\infty}\frac{1-t^{n}}{n}\hat{p}_{n}w^{n} \right)\mbox{exp} \left( \sum\limits_{n=1}^{\infty}\frac{\partial}{\partial p_{n}}z^{-n} \right)\frac{z-tw}{z-w}.
\end{align*}
Using repeatedly the above commutative relation noting that $\frac{\partial}{\partial p_n}.1=0$, we have
\begin{align}\label{e:commutative}
\begin{split}
&\mbox{exp} \left( \sum\limits_{n=1}^{\infty}\frac{\partial}{\partial p_{n}}z^{-n} \right)\mbox{exp} \left( \sum\limits_{n=1}^{\infty}\frac{1-t^{n}}{n}\hat{p}_{n}w_1^{n} \right)\cdots\mbox{exp} \left( \sum\limits_{n=1}^{\infty}\frac{1-t^{n}}{n}\hat{p}_{n}w_m^{n} \right).1\\
=&\mbox{exp} \left( \sum\limits_{n=1}^{\infty}\frac{1-t^{n}}{n}\hat{p}_{n}w_1^{n} \right)\cdots\mbox{exp} \left( \sum\limits_{n=1}^{\infty}\frac{1-t^{n}}{n}\hat{p}_{n}w_m^{n} \right)\mbox{exp}\prod_{m\geq i\geq1}\frac{z-tw_i}{z-w_i}.
%=&\mbox{exp} \left( \sum\limits_{n=1}^{\infty}\frac{1-t^{n}}{n}\hat{p}_{n}w_1^{n} \right)\cdots\mbox{exp} \left( \sum\limits_{n=1}^{\infty}\frac{1-t^{n}}{n}\hat{p}_{n}w_m^{n} \right)\mbox{exp} \left( \sum\limits_{n=1}^{\infty}\frac{\partial}{\partial p_{n}}z^{-n} \right).1\prod_{m\geq i\geq1}\frac{z-tw_i}{z-w_i}.
\end{split}
\end{align}

The coefficient of $z^{-k}w_1^{\mu_1}w_2^{\mu_2}\cdots w_{m}^{\mu_m}$ is $h^*_{k}q_{\mu}$ in \eqref{e:commutative} equals $h^*_{k}q_{\mu}$,
and
%therefore
%\begin{align*}
%\left(\sum_{n\geq0}h^*_nz^{-n}\right)\prod_{j=1}^{m}\left(\sum_{n\geq0}\hat{q}_nw_{j}^{n}\right).1,
%\end{align*}
%in which the coefficient of $z^{-k}w_1^{\mu_1}w_2^{\mu_2}\cdots w_{m}^{\mu_m}$ is $h^*_{k}q_{\mu}$.
the right-hand side in \eqref{e:commutative} equals to
\begin{align*}
&\prod_{j=1}^{m}\left(1+(1-t)\sum_{l\geq1}(\frac{w_j}{z})^{l} \right)\prod_{j=1}^{m}\left(\sum_{n\geq0}\hat{q}_nw_{j}^{n}\right)\left(\sum_{n\geq0}h^*_nz^{-n}\right).1\\
=&\prod_{j=1}^{m}\left(1+(1-t)\sum_{l\geq1}(\frac{w_j}{z})^{l} \right)\prod_{j=1}^{m}\left(\sum_{n\geq0}\hat{q}_nw_{j}^{n}\right).1,
\end{align*}
in which the corresponding coefficient of $z^{-k}w_1^{\mu_1}w_2^{\mu_2}\cdots w_{m}^{\mu_m}$ is exactly $\sum_{\tau\in C(\mu;k)}(1-t)^{l(\tau)}\hat{q}_{\mu-\tau}.1$
\end{proof}

Now we state our main result.
\begin{thm}\label{X,X}
Let $\mu$ be a partition of $n$, $\la=(\la_1,\ldots,\la_r)$ be a partition of $m\leq n$. We have that
\begin{align}\label{e:ite}
\chi_\mu^{\la}(q)=\sum_{\nu}\sum_{\tau\in C(\mu;m-|\nu|)}(-1)^{m-|\nu|-\la_1}\frac{q^{m-|\nu|-l(\tau)}}{(q-1)^{l(\mu)-l(\tau)-l(\mu-\tau)}}\chi_{\mu-\tau}^{\nu}(q)
\end{align}
summed over all partitions $\nu\subset\la^{[1]}$ such that $\la^{[1]}/\nu$ is a vertical strip.
\end{thm}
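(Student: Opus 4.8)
The plan is to prove everything at the level of the auxiliary quantity $X_\mu^\la(t)=\langle\hat{q}_\mu(t),s_\la\rangle$ and only convert to characters at the very end, since the relation $\chi_\mu^\la(q)=\frac{q^{|\mu|}}{(q-1)^{l(\mu)}}X_\mu^\la(q^{-1})$ lets me pass back and forth freely. The argument is a single chain of identities: expand $s_\la$ by the decomposition \eqref{e:decomposition}, transfer a multiplication operator across the inner product to its adjoint, evaluate that adjoint action by Proposition~\ref{t:h*q}, recognize the result as a sum of smaller $X$'s, and finally substitute $t=q^{-1}$ and repackage the scalars as characters. No induction is needed for the proof itself; the recursive character of \eqref{e:ite} is simply its content, and it terminates because each $\nu\subset\la^{[1]}$ satisfies $l(\nu)\leq l(\la)-1$.

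Concretely, I would first apply \eqref{e:decomposition} with $m=|\la|$ to write
\[
X_\mu^\la(t)=\sum_\nu(-1)^{m-|\nu|-\la_1}\langle\hat{q}_\mu(t),\,h_{m-|\nu|}s_\nu\rangle,
\]
the sum ranging over $\nu\subset\la^{[1]}$ with $\la^{[1]}/\nu$ a vertical strip. Since $h_{m-|\nu|}=S_{m-|\nu|}.1$ acts as a multiplication operator whose adjoint is $h^*_{m-|\nu|}$, I move it across the bracket to obtain $\langle h^*_{m-|\nu|}\hat{q}_\mu(t),\,s_\nu\rangle$. Proposition~\ref{t:h*q} then evaluates the left entry as $\sum_{\tau\in C(\mu;m-|\nu|)}(1-t)^{l(\tau)}\hat{q}_{\mu-\tau}(t)$, and each surviving term is exactly $\langle\hat{q}_{\mu-\tau}(t),s_\nu\rangle=X^\nu_{\mu-\tau}(t)$. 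This yields the purely combinatorial recursion
\[
X_\mu^\la(t)=\sum_\nu\sum_{\tau\in C(\mu;m-|\nu|)}(-1)^{m-|\nu|-\la_1}(1-t)^{l(\tau)}X^\nu_{\mu-\tau}(t).
\]
Here $\mu-\tau$ is a genuine composition with nonnegative parts because $\tau\subset\mu$, and $X^\nu_{\mu-\tau}(t)$ is well defined since $\hat{q}_{\mu-\tau}(t)$ depends only on the multiset of parts.

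It then remains to translate this into the stated character identity, which is where the only real (if minor) obstacle lies: the scalar bookkeeping. Setting $t=q^{-1}$, inserting $X_{\mu-\tau}^\nu(q^{-1})=\frac{(q-1)^{l(\mu-\tau)}}{q^{|\mu-\tau|}}\chi^\nu_{\mu-\tau}(q)$, and multiplying through by the prefactor $\frac{q^{|\mu|}}{(q-1)^{l(\mu)}}$ on the left, I would collapse the coefficient using $(1-q^{-1})^{l(\tau)}=(q-1)^{l(\tau)}q^{-l(\tau)}$ together with the weight identities $|\tau|=m-|\nu|$ (since $\tau\models m-|\nu|$) and $|\mu-\tau|=|\mu|-|\tau|$. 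The $q$-exponent then telescopes to $|\tau|-l(\tau)=m-|\nu|-l(\tau)$ and the $(q-1)$-exponent to $l(\tau)+l(\mu-\tau)-l(\mu)$, producing precisely the factor $\frac{q^{m-|\nu|-l(\tau)}}{(q-1)^{l(\mu)-l(\tau)-l(\mu-\tau)}}$ of \eqref{e:ite}. The two points to watch are that the sign $(-1)^{m-|\nu|-\la_1}$ must retain $\la_1$ (the first part of the original $\la$, not of $\nu$), and that $\chi^\nu_{\mu-\tau}(q)$ is interpreted as the character value at the cycle type obtained by sorting $\mu-\tau$.
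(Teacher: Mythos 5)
Your proposal is correct and follows essentially the same route as the paper's own proof: the identical chain of expanding $s_\la$ via \eqref{e:decomposition}, moving $h_{m-|\nu|}$ across the inner product to $h^*_{m-|\nu|}$, evaluating with Proposition~\ref{t:h*q}, and substituting $t=q^{-1}$ at the end. Your explicit scalar bookkeeping (which the paper leaves implicit in the final line) is accurate, including the two caveats about retaining $\la_1$ in the sign and sorting $\mu-\tau$ into a partition.
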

\begin{proof} Direct computation gives that %t follows from immediate calculation that
\begin{align*}
X_\mu^\lambda(t)&=\left\langle\hat{q}_\mu.1, S_\lambda.1 \right\rangle\\
                &=\sum_{\nu}(-1)^{m-|\nu|-\la_1}\left\langle \hat{q}_\mu.1,h_{m-|\nu|}s_{\nu}\right\rangle \quad\text{(by \eqref{e:decomposition})}\\
                &=\sum_{\nu}(-1)^{m-|\nu|-\la_1}\left\langle h_{m-|\nu|}^*\hat{q}_\mu.1,s_{\nu}\right\rangle\\
                &=\sum_{\nu}(-1)^{m-|\nu|-\la_1}\left\langle \sum_{\tau\in C(\mu;m-|\nu|)}(1-t)^{l(\tau)}\hat{q}_{\mu-\tau}.1,s_{\nu}\right\rangle \quad \text{(by \eqref{e:h*q})}\\
                &=\sum_{\nu}\sum_{\tau\in C(\mu;m-|\nu|)}(-1)^{m-|\nu|-\la_1}(1-t)^{l(\tau)}X^{\nu}_{\mu-\tau}(t)
\end{align*}
summed over all partitions $\nu\subset\la^{[1]}$ such that $\la^{[1]}/\nu$ is a vertical strip. Then \eqref{e:ite} follows from $\chi^{\la}_\mu(q)=\frac{q^{|\mu|}}{(q-1)^{l(\mu)}}X_\mu^\lambda(q^{-1})$.
\end{proof}
We remark that in \eqref{e:ite} $\mu-\tau$ is seen as a partition obtained by arranging parts of composition $\mu-\tau$ in descending order.

\begin{exmp}
Suppose $\la=(31^2)$, $\mu=(321)$. The possible chooses of $\nu$ are $\emptyset, (1), (1,1)$. By Theorem \ref{X,X}, we have
\begin{align*}
\chi_{(321)}^{(31^2)}(q)=&\sum_{\tau\in C(\mu;5)}\dfrac{q^{5-l(\tau)}}{(q-1)^{3-l(\tau)-l(\mu-\tau)}}\chi_{\mu-\tau}^{\emptyset}(q)-\sum_{\tau\in C(\mu;4)}\dfrac{q^{4-l(\tau)}}{(q-1)^{3-l(\tau)-l(\mu-\tau)}}\chi_{\mu-\tau}^{(1)}(q)\\
&+\sum_{\tau\in C(\mu;3)}\dfrac{q^{3-l(\tau)}}{(q-1)^{3-l(\tau)-l(\mu-\tau)}}\chi_{\mu-\tau}^{(1^2)}(q)\\
=&(3q^3-2q^2)\chi^{\emptyset}_{(1)}(q)-(2q^2-q)\chi_{(2)}^{(1)}(q)-(3q^3-4q^2+q)\chi_{(1^2)}^{(1)}(q)\\
&+q\chi^{(1^2)}_{(3)}(q)+(4q^2-4q+1)\chi_{(21)}^{(1^2)}(q)+q(q-1)^2\chi^{(1^2)}_{(1^3)}(q).
\end{align*}
So $\chi_{(321)}^{(31^2)}(q)=(q-1)(2q^2-8q+2).$
\end{exmp}

\begin{rem} In \eqref{e:ite} there appears the character $\chi^{\emptyset}_{(1)}(q)$ which is
%$\chi^{\emptyset}_{\mu}(q)$
not necessarily equal to 1. In fact, we have that
\begin{align*}
\chi^{\emptyset}_{\mu}(q)&=\frac{q^{|\mu|}}{(q-1)^{l(\mu)}}\langle \hat{q}_{\mu}(q^{-1}), 1 \rangle\\
&=\frac{q^{|\mu|}}{(q-1)^{l(\mu)}}\times\text{constant term of $\hat{q}_{\mu}(q^{-1})$}\\
&=q^{|\mu|-l(\mu)} \quad \text{(by \eqref{e:relq^})}.
\end{align*}
\end{rem}

\subsection{Murnaghan-Nakayama rule}
%In particular, for empty set $\emptyset$, we notice that  $\sum_{\la\vdash n}\frac{1}{z_{\la}(t)}=\begin{cases}1-t &\mathrm{if}\quad n\geq1\\1 &\mathrm{if}\quad  n=0\end{cases}$, so
%\begin{align}\label{X,0}
%    X_{\mu-\tau}^\emptyset(t)&=\langle \hat{q}_{\mu-\tau}, S_0.1\rangle \\
%        \notag       &=\sum_{\mu^{(1)}\vdash\mu_1-\tau_1}\ldots\sum_{\mu^{(l)}\vdash\mu_l-\tau_l}\dfrac{1}{z_{\mu^{(1)}}\ldots   z_{\mu^{(l)}}}\langle\hat{p}_{\mu^{(1)}}\ldots\hat{p}_{\mu^{(l)}},1 \rangle\\
%                     \notag &=(1-t)^{l(\mu-\tau)}.
%\end{align}
A {\it path} in a skew diagram $\theta$ is a sequence $x_0,x_1,\cdots,x_n$ of squares in $\theta$ such that $x_{i-1}$ and $x_i$ have a common side, for $1\leq i\leq n.$ A subset $\xi$ of $\theta$ is connected if any two squares in $\xi$ are connected by a path in $\xi$. The connected components, themselves skew diagrams, are by definition the maximal connected subsets of $\theta$. A skew diagram $\theta$ is a {\it border strip} if it is connected and
contains no $2\times 2$ blocks of squares (cf. \cite[p. 5]{Mac}). A skew diagram is a {\it generalized border strip} (written as GBS in short) if it
is a union of connected border strips. Any GBS is a union of its connected components, each of which is a border strip. A $k$-GBS is a GBS with $k$ boxes. In the example below, $\lambda=(4,3,3,1),$ $\mu=(3,2,1),$ the GBS $\lambda/\mu$ has three border strips. It's a 5-GBS.
\begin{gather*}
  \centering
\begin{tikzpicture}[scale=0.6]
   \coordinate (Origin)   at (0,0);
    \coordinate (XAxisMin) at (0,0);
    \coordinate (XAxisMax) at (4,0);
    \coordinate (YAxisMin) at (0,-3);
    \coordinate (YAxisMax) at (0,0);
\draw [thin, black] (0,0) -- (4,0);
    \draw [thin, black] (0,-1) -- (4,-1);
    \draw [thin, black] (0,-2) -- (3,-2);
    \draw [thin, black] (0,-3) -- (3,-3);
    \draw [thin, black] (0,-4) -- (1,-4);
    \draw [thin, black] (0,0) -- (0,-4);
    \draw [thin, black] (1,0) -- (1,-4);
    \draw [thin, black] (2,0) -- (2,-3);
    \draw [thin, black] (3,0) -- (3,-3);
    \draw [thin, black] (4,0) -- (4,-1);
    \filldraw[fill = gray]
    (3,0) rectangle (4,-1) (2,-1)rectangle(3,-2) (2,-2)rectangle(3,-3) (1,-2)rectangle(2,-3) (0,-3)rectangle(1,-4);
    \end{tikzpicture}
\end{gather*}
If a GBS $\theta=\la/\mu$ has $m$ connected components $(\xi_1,\xi_2,\ldots,\xi_m)$, we define the weight of $\theta$ by
$$wt(\theta;t)= (t-1)^{m-1}\prod\limits_{i=1}^{m}(-1)^{r(\xi_{i})-1}t^{c(\xi_i)-1},$$
where the $r(\xi_i)$ (resp. $c(\xi_i)$) denotes the number of rows (resp. columns) in the border strip $\xi_i$. We make the convention that $wt(\la/\mu;t)=1$ if $|\la/\mu|=0$.

More generally, we define
\begin{align}\label{e:defwt2}
wt(\la/\mu;k,t):=
\begin{cases}
t^{k-1}wt(\la/\mu;t), & |\la/\mu|=0;\\
(t-1)t^{k-|\la/\mu|-1}wt(\la/\mu;t), & 0<|\la/\mu|<k;\\
wt(\la/\mu;t), & |\la/\mu|=k;\\
0, & |\la/\mu|>k.
\end{cases}
\end{align}

Due to \cite[Corollary 4.2]{JL}, we have
\begin{align}\label{e:q*S}
q^*_{k}(t)S_{\la}.1=\sum_{\mu}t^{k-1}(1-t)wt(\la/\mu;t^{-1})S_{\mu}.1 \quad (k>0)
\end{align}
summed over all partitions $\mu$ such that $\la/\mu$ is a $k$-GBS.

We rewrite \eqref{e:relq^} as
\begin{align}\label{e:relq^2}
\frac{t^k}{t-1}\hat{q}_k(t^{-1})=\frac{t^k}{t-1}q_{k}(t^{-1})+\sum_{i=1}^{k-1}t^{k-1}q_{i}(t^{-1})+t^{k-1}.
\end{align}

Now we give an alternative proof of the following Murnaghan-Nakayama rule obtained first in \cite[Prop. 3.3]{DHP}.

\begin{thm}
Let $k$ ba a positive integer, then
\begin{align}
\frac{t^k}{t-1}\hat{q}^*_k(t^{-1})S_{\la}.1=\sum_{\mu}wt(\la/\mu;k,t)S_{\mu}.1
\end{align}
summed over partitions $\mu$ such that $\la/\mu$ is a GBS with $0\leq|\la/\mu|\leq k$. In terms of irreducible characters, the iterative Murnaghan-Nakayama rule takes the following form
\begin{align}\label{e:M-N}
\chi^{\la}_{\mu}(q)=\sum_{\nu}wt(\la/\nu;\mu_1,q)\chi^{\nu}_{\mu^{[1]}}(q)
\end{align}
summed over partitions $\nu$ such that $|\nu|\leq|\mu^{[1]}|$ and $\la/\nu$ is a GBS with $0\leq|\la/\nu|\leq \mu_1$.
\end{thm}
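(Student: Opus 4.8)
The plan is to establish the operator identity first and then deduce the character recursion \eqref{e:M-N} from it. For the operator identity, I would begin by taking the adjoint of the scalar relation \eqref{e:relq^2}. Since the assignment sending a symmetric function $f$ to the adjoint of multiplication by $f$ is linear, applying $*$ to \eqref{e:relq^2} yields
\begin{align*}
\frac{t^k}{t-1}\hat{q}^*_k(t^{-1})=\frac{t^k}{t-1}q^*_k(t^{-1})+\sum_{i=1}^{k-1}t^{k-1}q^*_i(t^{-1})+t^{k-1}\cdot\mathrm{Id},
\end{align*}
where the last summand is $t^{k-1}$ times the identity operator, being the adjoint of multiplication by the constant $q_0(t^{-1})=1$.

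The next step is to apply this operator to $S_\la.1=s_\la$ and insert \eqref{e:q*S} with $t$ replaced by $t^{-1}$, namely $q^*_i(t^{-1})S_\la.1=\sum_\mu t^{-(i-1)}(1-t^{-1})wt(\la/\mu;t)S_\mu.1$ summed over $\mu$ with $\la/\mu$ an $i$-GBS. I would then simplify the scalar attached to each GBS according to the number of removed boxes. For the leading term one checks that $\frac{t^k}{t-1}\cdot t^{-(k-1)}(1-t^{-1})=1$, producing the case $|\la/\mu|=k$ of \eqref{e:defwt2}; for $1\leq i\leq k-1$ the factor $t^{k-1}\cdot t^{-(i-1)}(1-t^{-1})$ collapses to $(t-1)t^{k-i-1}$, matching the middle case $0<|\la/\mu|<k$; and the isolated summand $t^{k-1}\cdot\mathrm{Id}$ supplies the $\mu=\la$ (i.e. $|\la/\mu|=0$) term with weight $t^{k-1}$, using $wt(\la/\la;t)=1$. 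Since the operators $q^*_i$ with $i\leq k$ remove at most $k$ boxes, no term with $|\la/\mu|>k$ occurs, matching the vanishing case. Collecting these gives exactly $\sum_\mu wt(\la/\mu;k,t)S_\mu.1$.

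For the character form, I would use the factorization $\hat{q}_\mu=\hat{q}_{\mu_1}\hat{q}_{\mu^{[1]}}$ and move the first factor across the inner product to its adjoint:
\begin{align*}
X^\la_\mu(t)=\langle\hat{q}_\mu.1,S_\la.1\rangle=\langle\hat{q}_{\mu^{[1]}}.1,\hat{q}^*_{\mu_1}(t)S_\la.1\rangle.
\end{align*}
Evaluating at $t=q^{-1}$ and substituting the operator identity (with $k=\mu_1$ and the roles of $t,t^{-1}$ as above) turns the right-hand factor into $\frac{q-1}{q^{\mu_1}}\sum_\nu wt(\la/\nu;\mu_1,q)S_\nu.1$, whence $X^\la_\mu(q^{-1})=\frac{q-1}{q^{\mu_1}}\sum_\nu wt(\la/\nu;\mu_1,q)X^\nu_{\mu^{[1]}}(q^{-1})$. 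Multiplying by $\frac{q^{|\mu|}}{(q-1)^{l(\mu)}}$ and using $|\mu|=\mu_1+|\mu^{[1]}|$, $l(\mu)=1+l(\mu^{[1]})$, the prefactor telescopes as $\frac{q^{|\mu|}}{(q-1)^{l(\mu)}}\cdot\frac{q-1}{q^{\mu_1}}=\frac{q^{|\mu^{[1]}|}}{(q-1)^{l(\mu^{[1]})}}$, and the normalization $\chi^\la_\mu(q)=\frac{q^{|\mu|}}{(q-1)^{l(\mu)}}X^\la_\mu(q^{-1})$ converts each $X^\nu_{\mu^{[1]}}(q^{-1})$ into $\chi^\nu_{\mu^{[1]}}(q)$, yielding \eqref{e:M-N}.

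The main obstacle I anticipate is bookkeeping rather than conceptual: correctly tracking the two interlocking substitutions $t\mapsto t^{-1}$ (one in passing from the scalar relation \eqref{e:relq^2} to \eqref{e:q*S}, another in evaluating at $t=q^{-1}$) and verifying that the three scalar regimes collapse precisely onto the piecewise definition \eqref{e:defwt2}. A secondary point requiring care is justifying that the constant term in \eqref{e:relq^2} contributes the identity operator under $*$, which is what produces the $|\la/\mu|=0$ term and hence the genuinely iterative (rather than strictly length-reducing) nature of the rule.
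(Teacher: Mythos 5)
Your proposal is correct and follows essentially the same route as the paper: take the adjoint of the scalar relation \eqref{e:relq^2}, apply it to $S_\la.1$, invoke \eqref{e:q*S} with $t$ replaced by $t^{-1}$, and check that the three scalar regimes ($|\la/\mu|=k$, $0<|\la/\mu|<k$, $|\la/\mu|=0$) collapse onto the piecewise weight \eqref{e:defwt2}, with all your scalar simplifications verifying correctly. Your explicit deduction of the character form \eqref{e:M-N} via $X^\la_\mu(t)=\langle\hat{q}_{\mu^{[1]}}.1,\hat{q}^*_{\mu_1}(t)S_\la.1\rangle$ at $t=q^{-1}$ is a routine step the paper leaves implicit (one could also note that $X^{\nu}_{\mu^{[1]}}$ vanishes for $|\nu|>|\mu^{[1]}|$ by degree reasons, which accounts for the stated restriction on $\nu$), and it is carried out correctly.
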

\begin{proof}
It follows from immediate calculation that
\begin{align*}
&\frac{t^k}{t-1}\hat{q}^*_k(t^{-1})S_{\la}.1\\
=&\frac{t^k}{t-1}q^*_{k}(t^{-1})S_{\la}.1+\sum_{i=1}^{k-1}t^{k-1}q^*_{i}(t^{-1})S_{\la}.1+t^{k-1}S_{\la}.1 \quad \text{(by \eqref{e:relq^2})}\\
=&\sum_{\mu\vdash |\la|-k}wt(\la/\mu;t)S_{\mu}.1+\sum_{i=1}^{k-1}t^{k-1}\sum_{\mu\vdash |\la|-i}(t-1)t^{-i}wt(\la/\mu;t)S_{\mu}.1+t^{k-1}S_{\la}.1 \quad \text{(by \eqref{e:q*S})}\\
=&\sum_{\mu}wt(\la/\mu;k,t)S_{\mu}.1 \quad \text{(by \eqref{e:defwt2})}
\end{align*}
summed over partitions $\mu$ such that $\la/\mu$ is a GBS with $0\leq|\la/\mu|\leq k$.
\end{proof}

%\begin{thm}
%    For partition $\la\vdash m\leq n$ and integer number $k$,
%    \begin{align*}
%        \hat{q}_k^*S_\lambda.1=\sum_{\tau\models k}(1-t)^{l(\tau)}S_{\la-\tau}.1+\sum_{i\geq1}^k\sum_{\tau\models k-i}(1-t)^{l(\tau)+1}S_{\la-\tau}.1
%    \end{align*}
%\end{thm}
%\begin{proof}
%    Using formula \eqref{e:relq^} and \cite[Theorem 2.4]{JL}, we have that
%    \begin{align*}
%\hat{q}_k^*S_{\la}.1&=q_k^*S_{\la}.1+(1-t)\sum_{i\geq1}^kq_{k-i}^*S_{\la}.1\\
%&=\sum_{\tau\models k}(1-t)^{l(\tau)}S_{\la-\tau}.1+\sum_{i\geq1}^k\sum_{\tau\models k-i}(1-t)^{l(\tau)+1}S_{\la-\tau}.1
%    \end{align*}
%\end{proof}
\subsection{Compact formulae for hook and two-row types}
In this subsection, we will consider character formulae for special cases based on the new iterative formula derived in the previous subsection.  We first consider the following simple cases.
\begin{prop}
Let $\la\vdash m$, $\mu\vdash n$ and $m\leq n$,
\begin{align}\label{e:first}
\chi^{\la}_{(1^n)}(q)&=\binom{n}{m}\frac{m!}{\prod_{(i,j)\in\la} h_{ij}},\\\label{e:second}
\chi^{(m)}_{\mu}(q)&=\frac{q^n}{(q-1)^{l(\mu)}}\sum_{\tau\in C(\mu;m)}(1-q^{-1})^{l(\tau)+l(\mu-\tau)},\\\label{e:third}
\chi^{(1^m)}_{\mu}(q)&=\frac{q^n}{(q-1)^{l(\mu)}}\sum_{\tau\in C(\mu;n-m)}(1-q^{-1})^{l(\tau)+l(\mu-\tau)}(-q^{-1})^{m-l(\mu-\tau)},\\\label{e:fouth}
\chi^{\la}_{(n)}(q)&=\sum_{\nu}wt(\la/\nu;n,q)
\end{align}
summed over partitions $\nu$ such that $|\nu|\leq|\mu^{[1]}|$ and $\la/\nu$ is a GBS with $0\leq|\la/\nu|\leq \mu_1$.
\end{prop}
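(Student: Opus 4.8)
The plan is to prove the four identities separately: \eqref{e:first} is a dimension count, \eqref{e:second} and \eqref{e:fouth} are one-line specializations of Theorem \ref{X,X} and the Murnaghan--Nakayama rule \eqref{e:M-N} respectively, while \eqref{e:third} requires a vertex-operator companion to Proposition \ref{t:h*q}.

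For \eqref{e:first} the point is that the standard element $T_{(1^n)}$ is the identity of $R_n(q)$, so $\chi^{\la}_{(1^n)}(q)=\chi^{\la}(1)=\dim V^{\la}$. Since $V^{\la}$ has basis $\{v_L: L\in T_n^{\la}\}$, its dimension is the number of $n$-standard tableaux of shape $\la\vdash m$, recorded above as $\binom{n}{m}f_{\la}$; inserting the hook-length formula $f_{\la}=m!/\prod_{(i,j)\in\la}h_{ij}$ gives the claim. As a cross-check I would note the same value falls out of the Frobenius formula \eqref{chara formula}, using $\hat{q}_{(1^n)}(t)=(1-t)^n(1+p_1)^n$ together with $\langle(1+p_1)^n,s_{\la}\rangle=\binom{n}{m}f_{\la}$.

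Identities \eqref{e:second} and \eqref{e:fouth} are collapses. For \eqref{e:second} I specialize Theorem \ref{X,X} to $\la=(m)$: then $\la^{[1]}=\emptyset$, so the only admissible $\nu$ is $\emptyset$ and $\la_1=m$, reducing \eqref{e:ite} to a single outer term with trivial sign $(-1)^{m-m}=1$. Substituting $\chi^{\emptyset}_{\mu-\tau}(q)=q^{(n-m)-l(\mu-\tau)}$ from the Remark and simplifying through $(1-q^{-1})^{j}=(q-1)^{j}/q^{j}$ yields \eqref{e:second} at once. For \eqref{e:fouth} I apply \eqref{e:M-N} with $\mu=(n)$, so $\mu^{[1]}=\emptyset$; since $\chi^{\nu}_{\emptyset}(q)=\langle 1,s_{\nu}\rangle=\delta_{\nu,\emptyset}$, every term with $\nu\neq\emptyset$ vanishes and only the summand $wt(\la/\emptyset;n,q)$ survives, which is precisely the stated expression.

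The substantive case is \eqref{e:third}. Since $s_{(1^m)}=e_m$, I would compute $X^{(1^m)}_{\mu}(t)=\langle\hat{q}_{\mu}.1,e_m\rangle=\langle e_m^{*}\hat{q}_{\mu}.1,1\rangle$ by establishing the analogue of Proposition \ref{t:h*q} for the adjoint $e_m^{*}$. Writing $\sum_{m\geq0}e_m^{*}z^{-m}=\exp\bigl(\sum_{n\geq1}(-1)^{n-1}\frac{\partial}{\partial p_n}z^{-n}\bigr)$ and using $[\frac{\partial}{\partial p_n},\hat{p}_m]=\delta_{n,m}$, the same calculus that produced the factor $\frac{z-tw}{z-w}$ in Proposition \ref{t:h*q} now produces $\frac{z+w}{z+tw}=1+(1-t)\sum_{k\geq1}(-t)^{k-1}(w/z)^k$, whence
\[
e_m^{*}\hat{q}_{\mu}.1=\sum_{\tau\in C(\mu;m)}(1-t)^{l(\tau)}(-t)^{m-l(\tau)}\hat{q}_{\mu-\tau}.1 .
\]
Pairing against $1$ and using that the constant term of $\hat{q}_{\mu-\tau}(t)$ equals $(1-t)^{l(\mu-\tau)}$ (the evaluation underlying the Remark) gives $X^{(1^m)}_{\mu}(t)=\sum_{\tau\in C(\mu;m)}(1-t)^{l(\tau)+l(\mu-\tau)}(-t)^{m-l(\tau)}$. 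Finally I put $t=q^{-1}$ and reindex through the bijection $\tau\mapsto\mu-\tau$ from $C(\mu;m)$ onto $C(\mu;n-m)$, under which $l(\tau)$ and $l(\mu-\tau)$ interchange; this converts the sum into the stated form \eqref{e:third}.

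The main obstacle is \eqref{e:third}: one must correctly guess and verify the companion commutation factor $\frac{z+w}{z+tw}$ (the alternating pattern $(-t)^{k-1}$ encodes $e_m$ rather than $h_m$), and then recognize the reindexing $\tau\mapsto\mu-\tau$ that turns the naturally occurring sum over $C(\mu;m)$ into the sum over $C(\mu;n-m)$ appearing in the statement. Once the right operator and substitution are in place, the other three identities are essentially bookkeeping.
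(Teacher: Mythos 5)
Your proposal is correct, and it matches the paper on \eqref{e:second} and \eqref{e:fouth}, which the paper likewise obtains as immediate specializations of Theorem \ref{X,X} (with $\la^{[1]}=\emptyset$, so $\nu=\emptyset$, combined with $\chi^{\emptyset}_{\mu-\tau}(q)=q^{n-m-l(\mu-\tau)}$) and of the Murnaghan--Nakayama rule \eqref{e:M-N}. You diverge on the other two identities, in both cases legitimately. For \eqref{e:first} the paper computes directly in the Frobenius picture, $\chi^{\la}_{(1^n)}(q)=\langle(p_1+1)^n,s_{\la}\rangle=\binom{n}{m}\langle p_1^m,s_{\la}\rangle$, and then invokes the hook formula; your primary argument instead counts the dimension of $V^{\la}$ at the identity $T_{(1^n)}$ via $n$-standard tableaux, which is more elementary and makes the $q$-independence transparent, while your ``cross-check'' is exactly the paper's proof (note $\hat q_1(t)=(1-t)(1+p_1)$, so the prefactor cancels as you say). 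For \eqref{e:third} the paper expands $\hat q_{\mu}(q^{-1})=\sum_{\tau\subset\mu}(1-q^{-1})^{l(\tau)}q_{\mu-\tau}(q^{-1})$ by \eqref{e:relq^} and then cites the known pairing $\langle q_{\nu}(t),s_{(1^m)}\rangle=(1-t)^{l(\nu)}(-t)^{m-l(\nu)}$ from \cite[(2.27)]{JL}, which directly produces the sum over $\tau\in C(\mu;n-m)$; you instead prove a self-contained $e_m^*$ analogue of Proposition \ref{t:h*q}, and your commutation factor is right: $[\tfrac{\partial}{\partial p_n},\hat p_m]=\delta_{n,m}$ gives $\exp\bigl(\sum_{n\geq1}\tfrac{(-1)^{n-1}(1-t^n)}{n}(w/z)^n\bigr)=\tfrac{z+w}{z+tw}=1+(1-t)\sum_{k\geq1}(-t)^{k-1}(w/z)^k$, so each positive part $\tau_i$ contributes $(1-t)(-t)^{\tau_i-1}$, yielding $(1-t)^{l(\tau)}(-t)^{m-l(\tau)}$ over $C(\mu;m)$; the constant term $(1-t)^{l(\mu-\tau)}$ of $\hat q_{\mu-\tau}$ and the involution $\tau\mapsto\mu-\tau$ (which swaps $l(\tau)$ and $l(\mu-\tau)$ and carries $C(\mu;m)$ onto $C(\mu;n-m)$) then reproduce \eqref{e:third} exactly. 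In effect your route reproves the cited inner-product formula from the vertex-operator calculus, buying self-containedness at the cost of a slightly longer argument, whereas the paper's citation keeps the proof to three lines; both are sound.
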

\begin{proof}
\eqref{e:second} and \eqref{e:fouth} are immediate results of \eqref{e:ite} and \eqref{e:M-N} respectively. For \eqref{e:first}, we have $\chi^{\la}_{(1^n)}(q)=\langle \frac{q^n}{(q-1)^n}\hat{q}_{(1^n)}(q^{-1}), s_{\la} \rangle=\langle (p_1+1)^n, s_{\la} \rangle=\binom{n}{m}\langle p_1^m, s_{\la} \rangle=\binom{n}{m}\frac{m!}{\prod h_{ij}}$, in which the last identity follows from the well-known hook formula of the symmetric group $\mathfrak{S}_n$ \cite{Sag}. For \eqref{e:third}, one has
\begin{align*}
\chi^{(1^m)}_{\mu}(q)=&\left\langle \frac{q^n}{(q-1)^{l(\mu)}}\hat{q}_{\mu}(q^{-1}), s_{(1^m)} \right\rangle\\
=&\frac{q^n}{(q-1)^{l(\mu)}}\left\langle \sum_{\tau\subset\mu}(1-q^{-1})^{l(\tau)}q_{\mu-\tau}(q^{-1}), s_{(1^m)} \right\rangle \quad\text{(by \eqref{e:relq^})}\\
=&\frac{q^n}{(q-1)^{l(\mu)}}\sum_{\tau\in C(\mu;n-m)}(1-q^{-1})^{l(\tau)+l(\mu-\tau)}(-q^{-1})^{m-l(\mu-\tau)}\quad\text{(by \cite[(2.27)]{JL})}.
\end{align*}
\end{proof}

For $\mu\vdash n$, we define two families of polynomials in $t$ associated with $\mu$ by
\begin{align*}
    a_{ij}(\mu;t)&=\sum_{\tau\in C(\mu;i)}\sum_{\theta\in C(\mu-\tau;j)}(1-t^{-1})^{l(\tau)+l(\theta)}(1-t)^{l(\mu-\tau-\theta)};\\
    b_{ij}(\mu;t)&=\sum_{\tau\in C(\mu;i)}\sum_{\theta\in C(\mu-\tau;j)}(1-t^{-1})^{l(\tau)+l(\theta)+l(\mu-\tau-\theta)},
\end{align*}
for $i+j\leq n$ and $a_{ij}(\mu;t)=b_{ij}(\mu;t)=0$ for $i+j>n$.

Symmetry of $\tau$ and $\theta$ yields
\begin{align} \label{a}
    a_{ij}(\mu;t)&=a_{ji}(\mu;t),\\ \label{b}
    b_{ij}(\mu;t)&=b_{ji}(\mu;t).
\end{align}
It is clear that $a_{0,0}(\mu;t)=(1-t)^{l(\mu)}$ and $b_{0,0}(\mu;t)=(1-t^{-1})^{l(\mu)}$.

\begin{prop}\label{t:generating}
We have the generating functions of $a_{ij}$ and $b_{ij}$ as follows:
\begin{align}
\sum_{i=0}^\infty\sum_{j=0}^\infty a_{ij}(\mu;t)v^iu^j&=\prod_{i=1}^{l(\mu)}\left(\sum_{\tau_i=0}^{\mu_i}\sum_{\theta_i=0}^{\mu_i-\tau_i}(1-t^{-1})^{1-\delta_{\tau_i,0}}(1-t^{-1})^{1-\delta_{\theta_i,0}}
(1-t)^{1-\delta_{\mu_i,\tau_i+\theta_i}}v^{\tau_i}u^{\theta_i}\right),\\
\sum_{i=0}^\infty\sum_{j=0}^\infty b_{ij}(\mu;t)v^iu^j&=\prod_{i=1}^{l(\mu)}\left(\sum_{\tau_i=0}^{\mu_i}\sum_{\theta_i=0}^{\mu_i-\tau_i}(1-t^{-1})^{1-\delta_{\tau_i,0}}(1-t^{-1})^{1-\delta_{\theta_i,0}}(1-t^{-1})^{1-\delta_{\mu_i,\tau_i+\theta_i}}v^{\tau_i}u^{\theta_j}\right).
\end{align}
\end{prop}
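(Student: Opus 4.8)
The plan is to prove both identities by expanding the product on the right-hand side and matching coefficients with the defining double sums. The single essential fact is that the length function is additive over parts: for a composition $\tau=(\tau_1,\ldots,\tau_m)$ with $m=l(\mu)$ and $0\le\tau_i\le\mu_i$, one has $l(\tau)=\sum_{i=1}^m(1-\delta_{\tau_i,0})$, since a part contributes to the length precisely when it is nonzero. The same holds for $l(\theta)$ and, crucially, for $l(\mu-\tau-\theta)=\sum_{i=1}^m(1-\delta_{\mu_i,\tau_i+\theta_i})$, because the $i$th part of $\mu-\tau-\theta$ vanishes exactly when $\mu_i=\tau_i+\theta_i$.

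First I would expand the product $\prod_{i=1}^{l(\mu)}(\cdots)$ on the right-hand side of the first identity by choosing, for each row index $i$, one summand indexed by $(\tau_i,\theta_i)$ from the corresponding factor. The ranges $0\le\tau_i\le\mu_i$ and $0\le\theta_i\le\mu_i-\tau_i$ encoded in the inner double sum say precisely that the resulting tuples assemble into a composition $\tau\subset\mu$ together with a composition $\theta\subset\mu-\tau$. The attached monomial is $v^{\sum_i\tau_i}u^{\sum_i\theta_i}=v^{|\tau|}u^{|\theta|}$, so a term in the expansion contributes to the coefficient of $v^iu^j$ exactly when $\tau\in C(\mu;i)$ and $\theta\in C(\mu-\tau;j)$.

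Next I would collect the three families of exponential factors. Using the additivity noted above,
\[
\prod_{i=1}^{l(\mu)}(1-t^{-1})^{1-\delta_{\tau_i,0}}=(1-t^{-1})^{l(\tau)},\qquad \prod_{i=1}^{l(\mu)}(1-t^{-1})^{1-\delta_{\theta_i,0}}=(1-t^{-1})^{l(\theta)},
\]
and likewise $\prod_{i=1}^{l(\mu)}(1-t)^{1-\delta_{\mu_i,\tau_i+\theta_i}}=(1-t)^{l(\mu-\tau-\theta)}$. Hence the contribution of each choice is exactly $(1-t^{-1})^{l(\tau)+l(\theta)}(1-t)^{l(\mu-\tau-\theta)}v^{|\tau|}u^{|\theta|}$, and summing over all admissible pairs $(\tau,\theta)$ and extracting the coefficient of $v^iu^j$ reproduces $a_{ij}(\mu;t)$ verbatim from its definition.

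Finally, the second identity follows by the identical argument after replacing the factor $(1-t)^{1-\delta_{\mu_i,\tau_i+\theta_i}}$ by $(1-t^{-1})^{1-\delta_{\mu_i,\tau_i+\theta_i}}$, which turns $(1-t)^{l(\mu-\tau-\theta)}$ into $(1-t^{-1})^{l(\mu-\tau-\theta)}$ and thereby yields $b_{ij}(\mu;t)$. There is no real obstacle here; the only point that needs care is the bookkeeping in the bijection between expanding the product over rows and the nested sum over $\tau\in C(\mu;i)$ and $\theta\in C(\mu-\tau;j)$, and in particular verifying that the upper limit $\mu_i-\tau_i$ on $\theta_i$ correctly enforces $\theta\subset\mu-\tau$ row by row.
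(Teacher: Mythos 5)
Your proof is correct and is essentially the paper's own argument read in the opposite direction: the paper rewrites the defining double sum for $a_{ij}(\mu;t)$ as a sum over tuples $(\tau_1,\ldots,\tau_l,\theta_1,\ldots,\theta_l)$ using the same additivity $l(\tau)=\sum_i(1-\delta_{\tau_i,0})$ (and likewise for $\theta$ and $\mu-\tau-\theta$) and then factors it into the product, whereas you expand the product and match coefficients against the definition. The bookkeeping you flag, including that the bound $\theta_i\leq\mu_i-\tau_i$ enforces $\theta\subset\mu-\tau$, is exactly the content of the paper's middle step, so there is no substantive difference.
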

\begin{proof}
It follows from immediate calculations that
\begin{align*}
&\sum_{i=0}^\infty\sum_{j=0}^\infty a_{ij}(\mu;t)v^iu^j\\
=&\sum_{i=0}^\infty\sum_{j=0}^\infty\sum_{\tau\in C(\mu;i)}\sum_{\theta\in C(\mu-\tau;j)}(1-t^{-1})^{l(\tau)+l(\theta)}(1-t)^{l(\mu-\tau-\theta)}v^{i}u^j\\
=&\sum_{i=0}^n\sum_{j=0}^n\sum\limits_{\tau_1+\ldots+\tau_l=i}\sum\limits_{\theta_1+\ldots+\theta_l=j}(1-t^{-1})^{2l-\sum_{s=1}^l\delta_{\tau_s,0}-\sum_{s=1}^l\delta_{\theta_s,0}}
(1-t)^{l-\sum_{s=1}^l\delta_{\mu_s,\tau_s+\theta_s}}v^{i}u^j\\
=&\prod_{i=1}^l\left(\sum_{\tau_i=0}^{\mu_i}\sum_{\theta_i=0}^{\mu_i-\tau_i}(1-t^{-1})^{1-\delta_{\tau_i,0}}(1-t^{-1})^{1-\delta_{\theta_i,0}}
(1-t)^{1-\delta_{\mu_i,\tau_i+\theta_i}}v^{\tau_i}u^{\theta_i}\right).
\end{align*}
The second identity can be derived similarly.
\end{proof}

Prop. \ref{t:generating} gives the following special cases, which is essential to the subsequent contents.

\begin{cor}\label{t:identity}
For a partition $\mu=(\mu_1,\ldots,\mu_l)\vdash n$, we have
\begin{align}
    &\sum_{i=0}^\infty\sum_{j=0}^\infty a_{ij}(\mu;q)q^{i+j}=\prod_{i=1}^l(q-1)q^{\mu_i-1},\\\label{a,q}
    &\sum_{i=0}^\infty\sum_{j=0}^\infty a_{ij}(\mu;q)(-q)^iq^j=\prod_{i=1}^l(1-q)(-q)^{\mu_i-1},\\\label{a,-q}
    &\sum_{i=0}^\infty\sum_{j=0}^\infty a_{ij}(\mu;q)(-q)^i(-q)^j=\prod_{i=1}^l(1-q)A(\mu_i;q),\\\label{b,1}
        &\sum_{i=0}^\infty\sum_{j=0}^\infty b_{ij}(\mu;q)=(1-q^{-1})^l\prod_{i=1}^l\left( 3\mu_i-3q^{-1}(\mu_i-1)+\dfrac{(\mu_i-1)(\mu_i-2)(1-q^{-1})^2}{2}\right),
\end{align}
where $A(\mu_i;q)=\left(\frac{(-q)^{\mu_i-1}(q^2+6q+1)+4}{(q+1)^2}+\frac{2(-q)^{\mu_i-1}(q-1)\mu_i}{q+1}\right)$.
\end{cor}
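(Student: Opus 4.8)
The plan is to read off all four evaluations directly from the factorized generating functions of Proposition~\ref{t:generating}. In each case one sets $t=q$ and specializes the bookkeeping variables $(v,u)$: $(q,q)$ for the $q^{i+j}$-weighting, $(-q,q)$ for the $(-q)^iq^j$-weighting, $(-q,-q)$ for \eqref{a,-q}, and $(1,1)$ for the $b_{ij}$-sum in \eqref{b,1}. Since both generating functions factor as $\prod_{i=1}^{l}(\cdots)$ over the parts, it suffices to evaluate the single factor attached to one part $m=\mu_i$ and then multiply; the stated products on the right-hand sides are exactly these factorizations.

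To evaluate one factor I would first pass to an auxiliary one-variable generating function. Setting $\rho=m-\tau-\theta$ turns the three exponents $1-\delta_{\tau,0}$, $1-\delta_{\theta,0}$, $1-\delta_{m,\tau+\theta}$ into the indicators of $\tau>0$, $\theta>0$, $\rho>0$, so the single-part $a$-factor is a sum over $\tau,\theta,\rho\ge0$ with $\tau+\theta+\rho=m$. Tagging the total degree by $z^{m}=z^{\tau+\theta+\rho}$ decouples the three indices, and each resulting series sums by $\sum_{k\ge0}c^{\,[k>0]}w^{k}=1+\frac{cw}{1-w}=\frac{1-(1-c)w}{1-w}$. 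This gives
\begin{align*}
\sum_{m\ge0}\big(a\text{-factor of }m\big)z^{m}
=\frac{(1-t^{-1}vz)(1-t^{-1}uz)(1-tz)}{(1-vz)(1-uz)(1-z)},
\end{align*}
and the same computation with the numerator factor $1-tz$ replaced by $1-t^{-1}z$ for the $b$-factor. Each identity is then the coefficient of $z^{m}$ after the relevant substitution, and multiplying these coefficients over $i$ yields the claimed products.

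Three of the four cases are immediate once the substitution collapses the rational function. The $q^{i+j}$-weighting yields $\frac{1-z}{1-qz}$, whose $z^{m}$-coefficient is $(q-1)q^{m-1}$; the $(-q)^iq^j$-weighting yields $\frac{1+z}{1+qz}$, giving $(1-q)(-q)^{m-1}$; and \eqref{b,1} yields $\big(\frac{1-q^{-1}z}{1-z}\big)^{3}$, whose coefficient I would expand via $\frac{1}{(1-z)^{3}}=\sum_{k}\binom{k+2}{2}z^{k}$ and regroup into the displayed cubic polynomial in $m$. Taking the product over $i$ reproduces the three corresponding formulas.

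The real work is \eqref{a,-q}. With $v=u=-q$, $t=q$ the generating function becomes $\frac{(1+z)^{2}(1-qz)}{(1+qz)^{2}(1-z)}$, which has a double pole at $z=-q^{-1}$ and a simple pole at $z=1$. I would extract $[z^{m}]$ (for $m=\mu_i\ge1$) through the partial-fraction decomposition $\frac{\alpha}{(1+qz)^{2}}+\frac{\beta}{1+qz}+\frac{\gamma}{1-z}$ plus a constant that affects only $m=0$, where $\frac{1}{(1+qz)^{2}}$, $\frac{1}{1+qz}$, $\frac{1}{1-z}$ contribute $(m+1)(-q)^{m}$, $(-q)^{m}$, $1$ respectively. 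A residue computation gives $\gamma=\frac{4(1-q)}{(q+1)^{2}}$, $\alpha=\frac{2(q-1)^{2}}{q(q+1)}$, with $\beta$ fixed accordingly. The main obstacle is carrying out this bookkeeping cleanly and verifying that the resulting combination of $m(-q)^{m-1}$, $(-q)^{m-1}$ and $1$ reassembles exactly into $(1-q)A(m;q)$ with the denominators $(q+1)^{2}$ and $q+1$ as stated; this matching is the only delicate, error-prone step.
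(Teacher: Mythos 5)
Your proposal is correct and takes essentially the paper's route: the paper obtains the corollary precisely by specializing $(v,u,t)$ in the factorized generating functions of Proposition~\ref{t:generating} (leaving the per-part evaluation implicit), which is exactly your plan. Your auxiliary-variable device — resumming the single-part factor against $z^{\mu_i}$ to get $\frac{(1-t^{-1}vz)(1-t^{-1}uz)(1-tz)}{(1-vz)(1-uz)(1-z)}$, with $1-tz$ replaced by $1-t^{-1}z$ in the $b$-case, then extracting coefficients — supplies the omitted computation correctly, and the one step you flag as delicate does check out: for \eqref{a,-q} your values $\gamma=\frac{4(1-q)}{(q+1)^2}$ and $\alpha=\frac{2(q-1)^2}{q(q+1)}$ are right, the remaining coefficient is $\beta=\frac{(q-1)(-q^2+6q+3)}{q(q+1)^2}$, and the resulting combination of $(m+1)(-q)^m$, $(-q)^m$ and $1$ indeed reassembles into $(1-q)A(\mu_i;q)$ as stated.
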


Now we present our main results: compact formulae for $\chi^{\la}_{\mu}(q)$ with $\la$ hook and two-row type.

\begin{thm}\label{t:compact}
Let $\mu\vdash n$ and $1\leq k\leq m\leq n$, we have
\begin{align}\label{e:hook}
\chi_{\mu}^{(k,1^{m-k})}(q)&=\frac{q^{n-m}}{(q-1)^{l(\mu)}}\sum_{i=k}^m(-1)^{m-k}a_{i,n-m}(\mu;q)q^i,\\\label{e:two}
\chi_{\mu}^{(k,m-k)}(q)&=\frac{q^n}{(q-1)^{l(\mu)}}\left(b_{k,m-k}(\mu;q)-b_{k+1,m-k-1}(\mu;q)\right).
\end{align}
\begin{proof}
For the first identity, by Theorem \ref{X,X}, all possible $\nu$'s can be chosen from $\{(1^j)\mid 0\leq j\leq m-k\}$, then
\begin{align*}
    \chi^{(k,1^{m-k})}_\mu(q)&=\sum_{j=0}^{m-k}\sum_{\tau\in C(\mu;m-j)}(-1)^{m-k-j}\dfrac{q^{m-j-l(\tau)}}{(q-1)^{l(\mu)-l(\tau)-l(\mu-\tau)}}\chi_{\mu-\tau}^{(1^j)}(q) \\
    &=\sum_{j=0}^{m-k}\sum_{\tau\in C(\mu;m-j)}(-1)^{m-k-j}\dfrac{q^{m-j-l(\tau)}}{(q-1)^{l(\mu)-l(\tau)-l(\mu-\tau)}}
    \dfrac{q^{n-m+j}}{(q-1)^{l(\mu-\tau)}}\\
    &\sum_{\theta\in C(\mu-\tau;n-m)}(1-q^{-1})^{l(\theta)+l(\mu-\tau-\theta)}(-q^{-1})^{j-l(\mu-\tau-\theta)}\quad\text{(by \eqref{e:third})}.
\end{align*}
Replacing $j\rightarrow m-i$ and simplification give \eqref{e:hook}. The second identity is obtained similarly. Just note $\nu=(m-k)$ or $(m-k-1)$ in this case.

%(ii) The case of two-row type is similar for hook, at this moment, $\nu=(m-k)$ or $(m-k-1)$. We can easily obtain the result with \eqref{e:second}.
%By theorem \ref{X,X}, we have
%\begin{align*}
%X_{\mu}^{(k,1^{m-k})}&=\langle \hat{q}_\mu, s_{(k,1^{m-k})}\rangle\\
%&=\sum_{n\geq i\geq k}(-1)^{i-k}\langle h^*_i\hat{q}_{\mu}.1, \sum_\nu S_\nu.1\rangle\quad\text{($\nu\subset(1^{m-k})$,$(1^{m-k})-\nu$ is a vertical (i-k)-strip)}\\
%&=\sum_{m\geq i\geq k}(-1)^{i-k}\left\langle\sum_{\tau\in C(\mu;i)}(1-t)^{l(\tau)}\hat{q}_{\mu-\tau}.1,S_{1^{(m-i)}}.1 \right\rangle\\
%&=\sum_{m\geq i\geq k}(-1)^{i-k}\sum_{\tau\in C(\mu;i)}(1-t)^{l(\tau)}\sum_{\theta\in C(\mu-\tau;n-m)}(1-t)^{l(\theta)+l(\mu-\tau-\theta)}(-t)^{m-i-l(\mu-\tau-\theta)}\quad\text{(by thm \ref{X, a row})}\\
%&=\sum_{i=k}^m\sum_{\tau\in C(\mu;i)}\sum_{\theta\in C(\mu-\tau;n-m)}(-1)^{m-k}t^{m-i}(1-t)^{l(\tau)+l(\theta)}(1-t^{-1})^{l(\mu-\tau-\theta)}.
%\end{align*}
\end{proof}
\end{thm}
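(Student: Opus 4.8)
The plan is to feed the hook shape $(k,1^{m-k})$ and the two-row shape $(k,m-k)$ into the iterative formula \eqref{e:ite} of Theorem \ref{X,X} and then collapse the resulting expressions using the already-established closed forms \eqref{e:second} and \eqref{e:third}. The first task is to enumerate the admissible inner partitions $\nu$. For the hook $\la=(k,1^{m-k})$ one has $\la^{[1]}=(1^{m-k})$, a single column, so the only $\nu$ with $\la^{[1]}/\nu$ a vertical strip are the columns $\nu=(1^{j})$, $0\le j\le m-k$; moreover $\la_1=k$ and $|\nu|=j$, so the sign in \eqref{e:ite} is $(-1)^{m-k-j}$. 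For the two-row shape $\la=(k,m-k)$ one has $\la^{[1]}=(m-k)$, a single row, and a vertical strip can contain at most one box of any row; hence only $\nu=(m-k)$ (empty strip, sign $+1$) and $\nu=(m-k-1)$ (one box, sign $-1$) occur.

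For the two-row case I would substitute \eqref{e:second} for the one-row characters $\chi^{(m-k)}_{\mu-\tau}$ and $\chi^{(m-k-1)}_{\mu-\tau}$ directly. Writing $|\mu-\tau|=n-k$ in the first sum and $n-k-1$ in the second, the two prefactors $q^{k-l(\tau)}/(q-1)^{l(\mu)-l(\tau)-l(\mu-\tau)}$ and $q^{|\mu-\tau|}/(q-1)^{l(\mu-\tau)}$ combine to $q^{n-l(\tau)}/(q-1)^{l(\mu)-l(\tau)}$, which I would rewrite as $\frac{q^n}{(q-1)^{l(\mu)}}(1-q^{-1})^{l(\tau)}$ using $(q-1)/q=1-q^{-1}$. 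The residual factor $(1-q^{-1})^{l(\theta)+l(\mu-\tau-\theta)}$ from \eqref{e:second} then merges with $(1-q^{-1})^{l(\tau)}$ to give exactly the summand defining $b_{k,m-k}(\mu;q)$ (respectively $b_{k+1,m-k-1}(\mu;q)$), and \eqref{e:two} follows immediately from the two signs $\pm1$.

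For the hook I would instead substitute the column formula \eqref{e:third} for $\chi^{(1^{j})}_{\mu-\tau}$, producing a triple sum over $j$, $\tau\in C(\mu;m-j)$ and $\theta\in C(\mu-\tau;n-m)$. After cancellation the two prefactors again collapse to $q^{n-l(\tau)}/(q-1)^{l(\mu)-l(\tau)}$. The hard part is the sign-and-exponent bookkeeping: the factor $(-q^{-1})^{j-l(\mu-\tau-\theta)}$ coming from \eqref{e:third} must be reconciled with the $(1-q)^{l(\mu-\tau-\theta)}$ appearing in the definition of $a_{i,n-m}$, whereas \eqref{e:third} instead supplies $(1-q^{-1})^{l(\mu-\tau-\theta)}$. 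Here I would use $-(1-q^{-1})=q^{-1}-1=q^{-1}(1-q)$ to convert that mismatched factor together with its sign into $q^{-l(\mu-\tau-\theta)}(1-q)^{l(\mu-\tau-\theta)}$, absorbing the surplus power of $q$ into the overall $q$-exponent. The running sign $(-1)^{m-k-j}$ from \eqref{e:ite} combines with the $(-1)^{j-l(\mu-\tau-\theta)}$ from the factor $(-q^{-1})^{j-l(\mu-\tau-\theta)}$ to leave $(-1)^{m-k-l(\mu-\tau-\theta)}=(-1)^{m-k}(-1)^{l(\mu-\tau-\theta)}$; the length-dependent sign is precisely what the conversion above absorbs, so the net sign becomes the uniform $(-1)^{m-k}$ demanded by \eqref{e:hook}. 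Finally, the substitution $i=m-j$ turns $C(\mu;m-j)$ into $C(\mu;i)$ and the index range $0\le j\le m-k$ into $k\le i\le m$, at which point the inner double sum over $\tau$ and $\theta$ is exactly $a_{i,n-m}(\mu;q)$ and the prefactor is $\frac{q^{n-m}}{(q-1)^{l(\mu)}}q^i$, yielding \eqref{e:hook}. The main obstacle throughout is keeping the three conventions $q$, $q-1$, $1-q^{-1}$ and the two signs aligned; once the identity $1-q^{-1}=(q-1)/q$ is applied systematically, the remaining computation is mechanical.
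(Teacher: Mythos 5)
Your proposal is correct and takes essentially the same route as the paper's own proof: specialize the iterative formula \eqref{e:ite} of Theorem \ref{X,X} to the admissible $\nu=(1^j)$ (hook) and $\nu=(m-k),(m-k-1)$ (two-row), substitute \eqref{e:third} resp.\ \eqref{e:second}, combine the prefactors into $\frac{q^n}{(q-1)^{l(\mu)}}(1-q^{-1})^{l(\tau)}$, and reindex $i=m-j$ to recognize $a_{i,n-m}$ and $b_{k,m-k}-b_{k+1,m-k-1}$. In fact your sign-and-exponent bookkeeping (converting $(1-q^{-1})^{l(\mu-\tau-\theta)}(-q^{-1})^{j-l(\mu-\tau-\theta)}$ into $(1-q)^{l(\mu-\tau-\theta)}$ times a uniform $(-1)^{m-k}q^{-j}$) correctly fills in exactly the steps the paper compresses into ``simplification.''
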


\begin{exmp} The following two examples show effectiveness of our compact formulas.

 (i) Suppose $\la=(31^2)$, $\mu=(321)$, we have
    \begin{align}\label{e:E1}
    \chi_{(321)}^{(31^2)}(q)&=\frac{q}{(q-1)^3}\left(a_{31}(\mu;q)q^3+a_{41}(\mu;q)q^4+a_{51}(\mu;q)q^5\right).
    \end{align}
   Theorem \ref{t:generating} gives
    \begin{align*}
    \sum_{i=0}^6\sum_{i=0}^6a_{ij}v^iu^j=&(1-t^{-1})^3\left[-t+(v+u)\right]\left[-t+(v^2+u^2)+(1-t)(u+v)+(1-t^{-1})vu\right]\\
    &\left[-t+(v^3+u^3)+(1-t)(v+v^2+u+u^2)+(1-t^{-1})(v^2u+vu^2)+(1-t^{-1})(1-t)uv\right].
\end{align*}
By taking the coefficient of $v^3u$, $v^4u$, $v^5u$, we conclude that
    $$\begin{cases}a_{31}(\mu;q)=(1-q^{-1})^3(7q^2-19q+13-2q^{-1})\\a_{41}(\mu;q)=(1-q^{-1})^3(11-8q-3q^{-1})\\a_{51}(\mu;q)=2(1-q^{-1})^4.\end{cases}$$
    Substituting them into \eqref{e:E1} yields $\chi_{(321)}^{(31^2)}(q)=q^3-10q^2+10q-2$.

(ii) Suppose $\la=(32)$, $\mu=(2^3)$, we have
    \begin{align}\label{e:E2}
    \chi_{(2^3)}^{(32)}(q)=\frac{q^6}{(q-1)^3}\left(b_{32}(\mu;q)-b_{41}(\mu;q)\right).
    \end{align}
    Theorem \ref{t:generating} implies
    \begin{align*}
        \sum_{i=0}^6\sum_{j=0}^6b_{ij}v^iu^j=(1-t^{-1})^3\left[v^2+u^2+(1-t^{-1})(vu+v+u)+1\right]^3.
    \end{align*}
By taking the coefficients of $v^3u^2$, $v^4u$, we conclude
$$\begin{cases}
    b_{32}(\mu;q)=6(1-q^{-1})^5+6(1-q^{-1})^4\\
    b_{41}(\mu;q)=6(1-q^{-1})^5+3(1-q^{-1})^4.
\end{cases}$$
Plugging them into \eqref{e:E2} gives $\chi_{(2^3)}^{(32)}(q)=3q^2(q-1)$.
\end{exmp}

Now we study the sign $q$-permutation representation of the rook monoid
$R_n(q)$. Let $V=V_{\bar{0}}\bigoplus V_{\bar{1}}$ be the $\mathbb{Z}_2$-graded vector space over $\mathbb{C}(q)$ with basis $\{v_1,v_2,\cdots,v_{a+b}\}$,
 where $V_{\bar 0}=\langle v_{1},v_{2},\cdots,v_{a}\rangle$ and $V_{\bar 1}=\langle v_{a+1},v_{a+2},\cdots,v_{a+b}\rangle$
%and for the even subspace $V_{\bar 0}$ and basis $\{v_{a+1},v_{a+2},\cdots,v_{a+b}\}$ for the odd subspace $V_{\bar 1}$.

Define the endomorphism $\pi$ of $V\otimes V$ by %The Hecke algebra $H_n(q)$ acts on $V\otimes V$ from right by %There is a right action $\pi$ on $V\bigotimes V$ as follows:
\begin{align}
\pi(v_k\otimes v_l)=
\begin{cases}
(-1)^{|v_k||v_l|}v_l\otimes v_k+(q-1)v_k\otimes v_l, k<l\\
\frac{(-1)^{|v_k|}(q+1)+q-1}{2}v_k\otimes v_l, k=l\\
(-1)^{|v_k||v_l|}qv_l\otimes v_k, k>l,
\end{cases}
\end{align}
and let $\pi_i$ be the endomorphism of $V^{\otimes n}$ by letting $\pi$ acting on the $(i, i+1)$ factor and the trivial identity action
elsewhere, $i=1, \cdots, n-1$. The action $\pi$ gives rise to the so-called $q$-permutation representation $\Phi^{q,n}_{a,b}$ of $H_n(q)$.
In particular, $\varphi^{n}_{1,1}$ and $\varphi^n_{2, 0}$ decomposes themselves \cite{Zhao} into some special direct sums of irreducible representations
of $H_n(q)$:
\begin{align}\label{e:1,1}
\varphi^{n}_{1,1}&=\sum\limits_{k =0}^{n-1}\zeta^{(n-k,1^k)},\\\label{e:2,0}
\varphi^{n}_{2,0}&=\sum\limits_{k=0}^{\lceil n/2\rceil}(n-2k+1)\zeta^{(n-k,k)},
\end{align}
where $\zeta^{\la}$ is the irreducible character of the Hecke algebra indexed by $\la$ and $\lceil n/2\rceil$ denotes the maximum integer number $\leq\frac{n}{2}$. In \cite{JL} two compact formulas were given for $\varphi^{n}_{1,1}, \varphi^{n}_{2,0}$:
\begin{align}\label{e:hooksum}
\varphi^n_{1,1}(T_{\gamma_{\mu}})&=\sum\limits_{i=0}^{n-1}\zeta^{(n-i,1^i)}_{\mu}(q)
=(-1)^{n-l(\mu)}2^{l(\mu)-1}\prod\limits_{i=1}^{l(\mu)}[\mu_i]_{-q},
\\\label{e:twosum}
\varphi^n_{2,0}(T_{\gamma_{\mu}})&=\sum\limits_{i=0}^{\lceil n/2\rceil}(n-2i+1)\zeta^{(n-i,i)}_{\mu}(q)
%=t^{n-l(\mu)}\prod\limits_{i=1}^{l}(2+(\mu_i-1)[2]_{-q^{-1}}).
=q^{n-2l(\mu)}\prod\limits_{i=1}^{l(\mu)}(1+q+\mu_i(q-1)),
\end{align}
where $[n]_q$ is the $q$-binomial number defined by $[n]_q:=\frac{q^n-1}{q-1}$.

The following results seem analogs of \eqref{e:hooksum} and \eqref{e:twosum} for the q-rook monoid $R_q(n)$.
\begin{thm}\label{t:perm}
Under the same assumption of Theorem \ref{t:compact}, one has
\begin{align}\label{e:fsum}
\sum_{m=0}^n\sum_{k=0}^{m-1}\chi_\mu^{(m-k,1^k)}(q)&=\frac{(-1)^{n+l(\mu)}}{2}\left(\prod_{i=1}^{l(\mu)}A(\mu_i;q)-(-q)^{n-l(\mu)}\right),\\\label{e:ssum}
\sum_{m=0}^n\sum_{k=0}^{\lceil m/2\rceil}(m-2k+1)\chi_\mu^{(m-k,k)}(q)
&=q^{n-l(\mu)}\prod_{i=1}^{l(\mu)}\left( 3\mu_i-3q^{-1}(\mu_i-1)+\dfrac{(\mu_i-1)(\mu_i-2)(1-q^{-1})^2}{2}\right).
\end{align}
\end{thm}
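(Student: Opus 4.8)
The plan is to reduce both identities to the closed-form evaluations of the double sums $\sum_{i,j}a_{ij}(\mu;q)(\pm q)^{i}(\pm q)^{j}$ and $\sum_{i,j}b_{ij}(\mu;q)$ recorded in Corollary \ref{t:identity}. The bridge in each case is to insert the compact hook formula \eqref{e:hook} (resp. the two-row formula \eqref{e:two}) and then reorganize the summation over shapes of all sizes $m\le n$ into an \emph{unconstrained} double sum over $(i,j)$, exploiting the vanishing $a_{ij}=a_{ji}=0$ (resp. $b_{ij}=b_{ji}=0$) for $i+j>n$ together with the symmetries \eqref{a} and \eqref{b}.

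For \eqref{e:fsum} I would first substitute \eqref{e:hook} written for the hook $(m-k,1^{k})$ (so the first part is $m-k$ and the total is $m$), obtaining
\[
\sum_{m=0}^{n}\sum_{k=0}^{m-1}\chi^{(m-k,1^{k})}_{\mu}(q)=\frac{1}{(q-1)^{l(\mu)}}\sum_{m=0}^{n}\sum_{k=0}^{m-1}q^{n-m}(-1)^{k}\sum_{i=m-k}^{m}a_{i,n-m}(\mu;q)q^{i}.
\]
Setting $j=n-m$ and summing the sign over the leg length, for fixed $(i,j)$ the innermost sum produces $\sum_{p}(-1)^{p}$, which collapses to the factor $\tfrac12\bigl[(-q)^{i}-q^{i}\bigr]$ after extracting $(-q)^{j}$ (up to a global sign $(-1)^{n}$). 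Since this bracket vanishes at $i=0$, the range may be extended to $i=0$, and then by the vanishing of $a_{ij}$ for $i+j>n$ to the full quadrant $i,j\ge0$. The two resulting double sums are evaluated by \eqref{a,-q} and, after the symmetry \eqref{a}, by \eqref{a,q}, giving $\prod_i(1-q)A(\mu_i;q)$ and $\prod_i(1-q)(-q)^{\mu_i-1}=(1-q)^{l(\mu)}(-q)^{n-l(\mu)}$. Dividing by $(q-1)^{l(\mu)}$ converts $(1-q)^{l(\mu)}$ into $(-1)^{l(\mu)}$, and together with the global sign this produces the prefactor $\tfrac{(-1)^{n+l(\mu)}}{2}$ and exactly the claimed difference.

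For \eqref{e:ssum} I would substitute \eqref{e:two} for $(m-k,k)$ and reindex the two-row shapes by the pair $(s,t)=(m-k,k)$ with $s\ge t\ge0$ and $s+t\le n$, the weight $m-2k+1$ becoming $s-t+1$. This turns the left-hand side into $\tfrac{q^{n}}{(q-1)^{l(\mu)}}\sum(s-t+1)\bigl(b_{s,t}-b_{s+1,t-1}\bigr)$. The shifted piece coming from $b_{s+1,t-1}$ is brought back to the $b_{s,t}$ basis by the change of variables $s\mapsto s+1,\ t\mapsto t-1$; comparing coefficients one finds that every off-diagonal term $s-t\ge1$ acquires net weight $2$ while the diagonal $s=t$ retains weight $1$. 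By the symmetry \eqref{b}, this is precisely $\sum_{i,j\ge0}b_{ij}(\mu;q)$, so \eqref{b,1} applies, and the prefactor simplifies through $\tfrac{q^{n}}{(q-1)^{l(\mu)}}(1-q^{-1})^{l(\mu)}=q^{n-l(\mu)}$ to yield the stated product.

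The main obstacle is purely organizational: correctly matching the two conventions for hook and two-row shapes between Theorem \ref{t:compact} and the present statement, and then carrying out the index manipulations so that the sum over all admissible partitions (constrained by $s\ge t$, by the leg range, and by size $\le n$) sign-collapses or telescopes into the unconstrained double sums of Corollary \ref{t:identity}. Once the correct substitutions $j=n-m$ and $(s,t)=(m-k,k)$ are in place, the evaluations \eqref{a,q}, \eqref{a,-q}, \eqref{b,1} and the symmetries \eqref{a}, \eqref{b} finish the computation with no further input.
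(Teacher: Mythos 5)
Your proposal is correct and takes essentially the same route as the paper's own proof: substitute \eqref{e:hook} and \eqref{e:two}, collapse the alternating sum over leg lengths to the factor $\tfrac{1}{2}\bigl[(-q)^{i}-q^{i}\bigr]$ (the paper writes this as $\tfrac{(-1)^m((-1)^i-1)}{2}$) and telescope the two-row weights to net coefficient $\sum_{k=0}^{m}b_{m-k,k}$, then extend to unconstrained double sums using the vanishing for $i+j>n$ together with the symmetries \eqref{a}, \eqref{b}, and finish with Corollary \ref{t:identity}. All index manipulations you describe, including the hook-convention matching and the off-diagonal weight-$2$ bookkeeping, agree with the paper's computation, so there is no gap.
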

\begin{proof}
For the first identity, by \eqref{e:hook}, we have
\begin{align}\label{e:I1}
\sum_{m=0}^n\sum_{k=0}^{m-1}\chi_\mu^{(m-k,1^k)}(q)&=\sum_{m=0}^n\sum_{k=0}^{m-1}\frac{(-1)^k}{(q-1)^l}\sum_{i=m-k}^ma_{i,n-m}(\mu;q)q^iq^{n-m}.
\end{align}
Note that
\begin{align*}
&\sum_{k=0}^{m-1}\sum_{i=m-k}^m (-1)^ka_{i,n-m}(\mu;q)q^i\\
=&\sum_{i=1}^{m}\sum_{k=m-i}^{m-1} (-1)^ka_{i,n-m}(\mu;q)q^i \quad\text{(exchange the sums)}\\
=&\sum_{i=1}^{m}\frac{(-1)^m((-1)^i-1)}{2}a_{i,n-m}(\mu;q)q^i.
\end{align*}
Since $\frac{(-1)^m((-1)^i-1)}{2}=0$ when $i=0$ and $a_{i,j}(\mu;q)=0$ when $i+j>|\mu|$,
\begin{align*}
\sum_{k=0}^{m-1}\sum_{i=m-k}^m (-1)^ka_{i,n-m}(\mu;q)q^i
=\sum_{i=0}^{n}\frac{(-1)^m((-1)^i-1)}{2}a_{i,n-m}(\mu;q)q^i.
\end{align*}
Substituting this to \eqref{e:I1} gives
\begin{align*}
&\sum_{m=0}^n\sum_{k=0}^{m-1}\chi_\mu^{(m-k,1^k)}(q)\\
=&\frac{(-1)^n}{(q-1)^{l(\mu)}}\sum_{m=0}^n\sum_{i=0}^n\frac{(-1)^i-1}{2}a_{i,n-m}(\mu;q)q^i{(-q)}^{n-m}\\
=&\frac{(-1)^n}{(q-1)^{l(\mu)}}\sum_{m=0}^n\sum_{i=0}^n\frac{(-1)^i-1}{2}a_{i,m}(\mu;q)q^i{(-q)}^{m}\quad\text{(replace $m$ by $n-m$)}\\
=&\frac{(-1)^n}{(q-1)^{l(\mu)}}\sum_{m=0}^{\infty}\sum_{i=0}^{\infty}\frac{(-1)^i-1}{2}a_{i,m}(\mu;q)q^i{(-q)}^{m}\quad\text{($a_{i,j}(\mu;q)=0$ if $i+j>n$)}\\
=&\frac{(-1)^n}{2(q-1)^{l(\mu)}}\left(\sum_{i=0}^{\infty}\sum_{m=0}^{\infty}a_{i,m}(\mu;q)(-q)^i(-q)^m-\sum_{m=0}^{\infty}\sum_{i=0}^{\infty}a_{m,i}(\mu;q)(-q)^mq^i \right)\quad\text{($a_{i,j}(\mu;q)=a_{j,i}(\mu;q)$)}.
\end{align*}
Then \eqref{e:fsum} follows from Corollary \ref{t:identity}.

For the second identity, we have
\begin{align*}
&\sum_{m=0}^n\sum_{k=0}^{\lceil m/2\rceil}(m-2k+1)\chi_\mu^{(m-k,k)}(q)\\
=&\sum_{m=0}^n\frac{q^n}{(q-1)^{l(\mu)}}\sum_{k=0}^{\lceil m/2\rceil}(m-2k+1)(b_{m-k,k}(\mu;q)-b_{m-k+1,k-1}(\mu;q))\quad\text{(by \eqref{e:two})}\\
=&\frac{q^n}{(q-1)^{l(\mu)}}\sum_{m=0}^n\sum_{k=0}^mb_{m-k,k}(\mu;q)\quad\text{($b_{i,j}(\mu;q)=b_{j,i}(\mu;q)$)}\\
=&\frac{q^n}{(q-1)^{l(\mu)}}\sum_{m=0}^{\infty}\sum_{k=0}^{\infty}b_{m,k}(\mu;q)\quad\text{($b_{i,j}(\mu;q)=0$ if $i+j>n$)}.
\end{align*}
Then Corollary \ref{t:identity} gives \eqref{e:ssum}.
\end{proof}

It would be interesting to construct the representation of $H_q(n)$ afforded by Theorem \ref{t:perm}.

\section{The second orthogonal relation}\label{s:bitrace}

In this section, we introduce the bitrace of $R_n(q)$ as an analogue of the bitrace of the Iwahori-Hecke
algebra $H_n(q)$ in type $A$ \cite{HLR}.

Recall that $\{T_{(A,B,\omega)}|(A,B,\omega\in\Omega)\}$ forms a linear $\mathbb{C}(q)$-basis of $R_n(q)$. Define the bitrace of elements $T_{(C,D,x)}$ and $T_{(E,F,y)}$ by
\begin{align}
 btr(T_{(C,D,x)},T_{(E,F,y)}):=\sum_{(A,B,\omega)\in\Omega}T_{(C,D,x)}T_{(A,B,\omega)}T_{(E,F,y)}|_{T_{(A,B,\omega)}}
\end{align}
where $T_{(C,D,x)}T_{(A,B,\omega)}T_{(E,F,y)}|_{T_{(A,B,\omega)}}$ denotes the coefficient of the basis element $T_{(A,B,\omega)}$ in the linear expansion of $T_{(C,D,x)}T_{(A,B,\omega)}T_{(E,F,y)}$. If $(A,B,\omega)\in\Omega$, let $L_{(A,B,\omega)}$ (resp. $R_{(A,B,\omega)}$) denote the linear transformation of $R_n(q)$ induced by the action of $(A,B,\omega)$ on $R_n(q)$ by the left (resp. right) multiplication, then $L_{(A,B,\omega)}$ and $R_{(A,B,\omega)}$ are commutative each other and
\begin{align}
btr(T_{(C,D,x)},T_{(E,F,y)})=Tr(L_{(C,D,x)}R_{(E,F,y)}).
\end{align}

The algebra $R_n(q)$ becomes a $R_n(q)$-bimodule under the left and right multiplication.
Since $R_n(q)$ is semisimple, by double centralizer theory, we have that
\begin{align}
    R_n(q)\cong\bigoplus_{k=0}^n\bigoplus_{\la\vdash k}R_\lambda\otimes R^{\la}
\end{align}
where $R_\lambda$ is the irreducible left $R_n(q)$-module labeled by $\la$, $R^{\la}$ is the irreducible right $R_n(q)$-module labeled by $\la$, then
\begin{align}
    btr(T_{(C,D,x)},T_{(E,F,y)})=\sum_{k=0}^n\sum_{\la\vdash k}\chi_{R_n(q)}^\la(T_{(C,D,x)})\chi_{R_n(q)}^\la(T_{(E,F,y)}).
\end{align}

Note that any irreducible character of $R_n(q)$ is completely determined by its values on the standard elements $T_{\mu},\mu\vdash k,k\leq n$ \cite[Theorem 5.5]{DHP}. For this reason, we can define the bitrace of arbitrary two compositions $\mu, \nu\models n$ as
\begin{align}
    btr(\mu,\nu):=btr(T_{\mu},T_{\nu})=\sum_{k=0}^n\sum_{\lambda\vdash k}\chi^\la_\mu(q)\chi^\la_\nu(q).
\end{align}

Using \eqref{chara formula} and \eqref{e:relq^} respectively, one has
\begin{align}\label{e:btr1}
\begin{split}
    btr(\mu,\nu)=&\dfrac{q^{2n}}{(q-1)^{l(\mu)+l(\nu)}}\langle\hat{q}_\mu(x;q^{-1}),\hat{q}_\nu(x;q^{-1}) \rangle\\
    =&\dfrac{q^{2n}}{(q-1)^{l(\mu)+l(\nu)}}\left\langle \sum_{\tau\subset\mu}(1-q^{-1})^{l(\mu-\tau)}q_{\tau}(q^{-1}),\sum_{\theta\subset\nu}(1-q^{-1})^{l(\nu-\theta)}q_\theta(q^{-1})\right\rangle \\ =&\dfrac{q^{2n}}{(q-1)^{l(\mu)+l(\nu)}}\sum_{k=0}^{n}\sum_{\substack{\tau\in C(\mu;k)\\\theta\in C(\nu;k)}}(1-q^{-1})^{l(\mu-\tau)+l(\nu-\theta)}\langle q_\tau(q^{-1}),q_\theta(q^{-1})\rangle\\
    =&\dfrac{q^{2n}}{(q-1)^{l(\mu)+l(\nu)}}\sum_{k=0}^{n}\sum_{\substack{\tau\in C(\mu;k)\\\theta\in C(\nu;k)}}(1-q^{-1})^{l(\tau)+l(\theta)}\langle q_{\mu-\tau}(q^{-1}),q_{\nu-\theta}(q^{-1})\rangle.
    \end{split}
\end{align}

For simplicity we denote for integer $r>0$
\begin{align*}
(r)_{t}=(t-1)\frac{t^{2r}-1}{t+1}
\end{align*}
and make the convention that $(r)_t = 0$ for $r<0$ and $(0)_t = 1$.

By \cite[(5.8)]{JL} we have that
\begin{align}\label{e:<q,q>}
\langle q_{\mu-\tau}(q^{-1}),q_{\nu-\theta}(q^{-1})\rangle=\frac{1}{q^{2n-2k}}\sum\limits_{M}\prod_{m_{ij}}(m_{ij})_q.
\end{align}
summed over all $l(\mu)\times l(\nu)$ nonnegative integer matrices $M=(m_{ij})$ such that
the row sums are $\mu_1-\tau_1,\ldots,\mu_{l(\mu)}-\tau_{l(\mu)}$ and the column sums are $\nu_1-\theta_1,\ldots,\nu_{l(\nu)}-\theta_{l(\nu)}$.

Substituting \eqref{e:<q,q>} into \eqref{e:btr1} gives that
\begin{align}\label{e:btr2}
\begin{split}
&btr(\mu,\nu)\\
=&\dfrac{1}{(q-1)^{l(\mu)+l(\nu)}}\sum_{k=0}^{n}\sum_{\substack{\tau\in C(\mu;k)\\\theta\in C(\nu;k)}}(1-q^{-1})^{l(\tau)+l(\theta)}q^{2k}\sum\limits_{M}\prod_{m_{ij}}(m_{ij})_q\\
=&\dfrac{1}{(q-1)^{l(\mu)+l(\nu)}}\sum_{k=0}^{n}\sum_{\substack{\tau\in C(\mu;k)\\\theta\in C(\nu;k)}}\sum\limits_{M}\prod_{i\geq 1}^{l(\mu)}(1-q^{-1})^{1-\delta_{0,\tau_i}}q^{\tau_i}\prod_{j\geq 1}^{l(\nu)}(1-q^{-1})^{1-\delta_{0,\theta_{i}}}q^{\theta_i}\prod_{m_{ij}}(m_{ij})_q
\end{split}
\end{align}
summed over all $l(\mu)\times l(\nu)$ nonnegative integer matrices $M=(m_{ij})$ such that
the row sums are $\mu_1-\tau_1,\ldots,\mu_{l(\mu)}-\tau_{l(\mu)}$ and the column sums are $\nu_1-\theta_1,\ldots,\nu_{l(\nu)}-\theta_{l(\nu)}$.

For an arbitrary nonnegative integer matrix $M=(m_{ij})$, define the weight of element $m_{ij}$ by
\begin{align}
wt(m_{ij}):=
\begin{cases}
(1-q^{-1})^{1-\delta_{0,m_{ij}}}q^{m_{ij}}, & \text{$i=1$ or $j=1$};\\
(m_{ij})_q, & \text{$i>1$ and $j>1$}.
\end{cases}
\end{align}
Moreover, the weight of $M$ is defined as $wt(M):=\prod_{m_{ij}}wt(m_{ij})$.

Then we can derive the following combinatorial formula for $btr(\mu,\nu)$.
\begin{thm}\label{t:bitrace}
Let $\mu,\nu\models n$, then
\begin{align}
btr(\mu,\nu)=\dfrac{1}{(q-1)^{l(\mu)+l(\nu)}}\sum_{M}wt(M)
\end{align}
summed over all $(l(\mu)+1)\times (l(\nu)+1)$ nonnegative integer matrices $M=(m_{ij})$ satisfying \\
(i) $m_{11}=0$;\\
(ii) the first row sum equals to the first column sum;\\
(iii) the $i$-th row sum is $\mu_i$ and the $j$-th column sum is $\nu_j$, $2\leq i\leq l(\mu)$, $2\leq j\leq l(\nu)$.
\end{thm}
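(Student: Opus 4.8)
The plan is to read the claimed formula directly off the expansion \eqref{e:btr2} by repackaging its three independent pieces of summation data into a single augmented matrix. Recall that in \eqref{e:btr2} the index $k$ runs from $0$ to $n$, the compositions $\tau\subset\mu$ and $\theta\subset\nu$ both have weight $k$, and the inner matrix $N=(n_{ij})$ ranges over $l(\mu)\times l(\nu)$ nonnegative integer matrices whose $i$-th row sum is $\mu_i-\tau_i$ and whose $j$-th column sum is $\nu_j-\theta_j$. The total weight of a term is $\prod_i(1-q^{-1})^{1-\delta_{0,\tau_i}}q^{\tau_i}\cdot\prod_j(1-q^{-1})^{1-\delta_{0,\theta_j}}q^{\theta_j}\cdot\prod_{i,j}(n_{ij})_q$.

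The key step is a weight-preserving bijection. Given a triple $(\tau,\theta,N)$ as above I would build the matrix $M=(m_{ij})$ of size $(l(\mu)+1)\times(l(\nu)+1)$ by setting $m_{11}=0$, recording $\tau$ down the rest of the first column via $m_{i+1,1}=\tau_i$, recording $\theta$ across the rest of the first row via $m_{1,j+1}=\theta_j$, and embedding $N$ as the interior block via $m_{i+1,j+1}=n_{ij}$. Then the $(i+1)$-th row sum is $\tau_i+(\mu_i-\tau_i)=\mu_i$ and the $(j+1)$-th column sum is $\theta_j+(\nu_j-\theta_j)=\nu_j$, so condition (iii) holds; condition (i) is the choice $m_{11}=0$; and since the first row sum equals $|\theta|$ while the first column sum equals $|\tau|$, condition (ii) is equivalent to $|\tau|=|\theta|$, which is exactly the requirement that $\tau$ and $\theta$ share a common weight $k$. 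The map is manifestly invertible: from any $M$ obeying (i)--(iii) one recovers the common value $k$ as the first-row (equivalently first-column) sum, reads $\tau$ and $\theta$ off the border, and takes $N$ to be the interior block, whose margins are forced to be $\mu_i-\tau_i$ and $\nu_j-\theta_j$ by (iii).

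It then suffices to match weights entry by entry. Under the definition of $wt$, the border entries ($i=1$ or $j=1$) contribute exactly $(1-q^{-1})^{1-\delta_{0,m_{ij}}}q^{m_{ij}}$, reproducing the $\tau$- and $\theta$-products; the interior entries contribute $(m_{ij})_q=(n_{ij})_q$, reproducing the matrix weight; and the corner $m_{11}=0$ contributes $(1-q^{-1})^{1-\delta_{0,0}}q^{0}=1$, hence is harmless. Therefore $\sum_M wt(M)$ equals the full triple sum in \eqref{e:btr2}, and dividing by $(q-1)^{l(\mu)+l(\nu)}$ yields the asserted formula.

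The only point requiring care is the observation that condition (ii), combined with $m_{11}=0$, is precisely the mechanism that ties $\tau$ and $\theta$ to a common weight $k$; once this is recognized the argument is routine bookkeeping of margins and weights, and no substantive obstacle remains because \eqref{e:btr2} has already isolated all the relevant data.
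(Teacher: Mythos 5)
Your proposal is correct and follows essentially the same route as the paper: the theorem is obtained there by reading the claimed formula directly off \eqref{e:btr2}, repackaging the triple $(\tau,\theta,M)$ into an augmented $(l(\mu)+1)\times(l(\nu)+1)$ matrix exactly as you describe, with the corner $m_{11}=0$ contributing weight $1$ and condition (ii) encoding the common weight $|\tau|=|\theta|=k$. In fact your explicit weight-preserving bijection and margin bookkeeping spell out details the paper leaves implicit, since it states the theorem immediately after \eqref{e:btr2} without a separate proof.
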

\begin{cor}
    Let $T_\mu\in R_n(q)$ and $\mu\models n$, then the trace of regular representation of $R_n(q)$ at element $T_\mu$ is given by
    \begin{align}\label{e:reg}
         \chi^{reg}(T_\mu)=\frac{q^n}{(q-1)^{l(\mu)}}\sum_{i=0}^k\sum_{\tau\in C(\mu;i)}\binom{n}{i}(1-q^{-1})^{l(\mu-\tau)+i}\dfrac{i!}{\prod_j\tau_j!}.
    \end{align}
\end{cor}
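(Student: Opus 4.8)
The plan is to recognize the regular character as a single distinguished value of the bitrace and then quote the machinery already built. Since $R_n(q)$ is semisimple, the bimodule decomposition $R_n(q)\cong\bigoplus_{k=0}^n\bigoplus_{\la\vdash k}R_\la\otimes R^\la$ shows that, viewed as a left module, each simple $R_\la$ occurs with multiplicity $\dim R^\la=\dim V^\la$, so that $\chi^{reg}(T_\mu)=\sum_{k=0}^n\sum_{\la\vdash k}\dim(V^\la)\,\chi^\la_\mu(q)$. The crucial observation is that $\dim(V^\la)=\chi^\la_{(1^n)}(q)$: indeed $T_{(1^n)}$ is the identity of $R_n(q)$, so its character value is the dimension, in agreement with \eqref{e:first}. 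Hence
\begin{align*}
\chi^{reg}(T_\mu)=\sum_{k=0}^n\sum_{\la\vdash k}\chi^\la_{(1^n)}(q)\,\chi^\la_\mu(q)=btr((1^n),\mu)=btr(\mu,(1^n)),
\end{align*}
using the symmetry of the bitrace, and so the corollary is exactly the specialization $\nu=(1^n)$ of Theorem \ref{t:bitrace}.

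Next I would feed $\nu=(1^n)$ into the symmetric-function form \eqref{e:btr1} rather than the matrix form, since the one-row columns collapse almost completely. Here $\theta$ ranges over $C((1^n);k)$, i.e. over the $k$-element subsets of $\{1,\dots,n\}$: there are $\binom{n}{k}$ of them, each with $l(\theta)=k$, and $\nu-\theta=(1^{n-k})$. Because $q_1(t)=(1-t)p_1$ by \eqref{e:qfcn2}, we get $q_{\nu-\theta}(q^{-1})=(1-q^{-1})^{n-k}p_1^{\,n-k}$, independent of the chosen subset, so summing over $\theta$ simply contributes the factor $\binom{n}{k}(1-q^{-1})^{k}$.

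The remaining analytic input is the inner product $\langle q_{\mu-\tau}(q^{-1}),p_1^{\,n-k}\rangle$. I would expand $q_{\mu-\tau}(t)=\prod_i q_{\mu_i-\tau_i}(t)$ in the power-sum basis; since the coefficient of $p_{(1^r)}$ in $q_r(t)$ equals $(1-t)^r/r!$ (read off from \eqref{e:qfcn2}) and $\langle p_{(1^{n-k})},p_1^{\,n-k}\rangle=(n-k)!$, only the all-$p_1$ term survives, giving
\begin{align*}
\langle q_{\mu-\tau}(q^{-1}),p_1^{\,n-k}\rangle=\frac{(n-k)!\,(1-q^{-1})^{n-k}}{\prod_i(\mu_i-\tau_i)!}.
\end{align*}

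Finally I would collect all factors and reindex. Substituting the above into \eqref{e:btr1} and writing $\sigma=\mu-\tau$ (so $\sigma\in C(\mu;n-k)$, $l(\tau)=l(\mu-\sigma)$, and $\tfrac{(n-k)!}{\prod_i(\mu_i-\tau_i)!}=\tfrac{(n-k)!}{\prod_j\sigma_j!}$) and then setting $i=n-k$, one converts the prefactors via $1-q^{-1}=(q-1)/q$ and $\binom{n}{k}=\binom{n}{i}$ into precisely the right-hand side of \eqref{e:reg}. The only genuine content lies in the conceptual reduction $\chi^{reg}(T_\mu)=btr(\mu,(1^n))$ and in the evaluation of the $q$-function inner product; the main place to be careful is the exponent bookkeeping in the reindexing $\sigma=\mu-\tau$, where the multinomial $\tfrac{i!}{\prod_j\sigma_j!}$ and the power $(1-q^{-1})^{l(\mu-\tau)+i}$ must be tracked against the target and the ranges $\sum_{k=0}^n\leftrightarrow\sum_{i=0}^n$ reconciled (terms with $i>n$ vanish since $C(\mu;i)=\emptyset$).
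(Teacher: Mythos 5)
Your proof is correct, and it reaches \eqref{e:reg} by a route that genuinely differs from the paper's in the evaluation step. Both arguments share the same skeleton — identify $\chi^{reg}(T_\mu)=btr(\mu,(1^n))$ and finish with the reindexing $i\mapsto n-i$, $\tau\mapsto\mu-\tau$ — but the paper evaluates $btr(\mu,(1^n))$ by specializing the matrix formula of Theorem \ref{t:bitrace}: for $\nu=(1^n)$ the first row of $M$ is a $0$-$1$ vector ($\binom{n}{i}$ choices), the first column is $(0,\tau)$ with $\tau\in C(\mu;i)$, and the inner submatrix $\tilde M$ is a $0$-$1$ matrix with unit column sums and row sums $\mu_j-\tau_j$, of which there are $\frac{(n-i)!}{\prod_j(\mu_j-\tau_j)!}$, each such $M$ having weight $(1-q^{-1})^{l(\tau)+i}q^{2i}(q-1)^{2n-2i}$. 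You instead stop one step earlier in the paper's chain, at \eqref{e:btr1}, and compute $\langle q_{\mu-\tau}(q^{-1}),q_{(1^{n-k})}(q^{-1})\rangle$ by hand from \eqref{e:qfcn2}; this makes the corollary independent of Theorem \ref{t:bitrace} and of the imported inner-product formula \eqref{e:<q,q>} from \cite[(5.8)]{JL}, at the price of redoing a small piece of that formula by elementary means. The two computations agree, as they must: your multinomial $\frac{(n-k)!}{\prod_i(\mu_i-\tau_i)!}$ is exactly the paper's count of $0$-$1$ matrices with unit column sums, and $(1)_q=(q-1)^2$ turns the paper's weight into your factor $(1-q^{-1})^{2(n-k)}$ once the prefactor $q^{-2(n-k)}$ in \eqref{e:<q,q>} is absorbed. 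I checked that your final bookkeeping closes: the total exponent of $1-q^{-1}$ is $l(\mu-\sigma)+(n-i)+2i$, and extracting $(1-q^{-1})^n=(q-1)^n/q^n$ converts $\frac{q^{2n}}{(q-1)^{l(\mu)+n}}$ into $\frac{q^n}{(q-1)^{l(\mu)}}$, leaving $(1-q^{-1})^{l(\mu-\sigma)+i}$ as required. Your justification of $\chi^{reg}(T_\mu)=btr(\mu,(1^n))$ via $\chi^\la_{(1^n)}(q)=\dim V^\la$ and \eqref{e:first} is more explicit than the paper's ``by definition'' (which tacitly uses that $T_{(1^n)}$ is the identity, so that $btr(\mu,(1^n))=Tr(L_{T_\mu})$ is the regular character); both are fine. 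Finally, note that the upper summation limit $k$ in \eqref{e:reg} as printed is a typo for $n$: both your derivation and the paper's yield $\sum_{i=0}^n$ (equivalently, one may sum over all $i\geq 0$ since $C(\mu;i)=\emptyset$ for $i>n$).
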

\begin{proof}
    By definition, $\chi^{reg}(T_\mu)=btr(\mu,(1^n))$. Let us first fix the first row (column) sum in $M$ to be $i$ ($0\leq i\leq n$), then the first row in $M$ has $\binom{n}{i}$ possibilities. If $\tilde{M}$ denotes the submatrix of $M$ obtained by removing the first row and the first column, then the entries in $\tilde{M}$ are $0$ or $1$. Assume that we choose the first column to be $(0,\tau_1,\cdots,\tau_{l(\mu)})\subset\mu$, then each column has at most one $1$ and the $j$-th row has $\mu_j-\tau_j$ $1$'s in $\tilde{M}$. Thus the number of these $\tilde{M}$ is $\dfrac{(n-i)!}{\prod_j(\mu_j-\tau_j)!}$. In this case, $wt(M)=(1-q^{-1})^{l(\tau)+i}q^{2i}(q-1)^{2n-2i}$. Hence $btr(\mu,(1^n))=\frac{1}{(q-1)^{l(\mu)+n}}\sum_{i=0}^{n}\sum_{\tau\in C(\mu;i)}\binom{n}{i}\dfrac{(n-i)!}{\prod_j(\mu_j-\tau_j)!}(1-q^{-1})^{l(\tau)+i}q^{2i}(q-1)^{2n-2i}.$ Substituting $i\rightarrow n-i$ and $\tau\rightarrow \mu-\tau$ gives \eqref{e:reg}.
\end{proof}

We remark that \eqref{e:reg} reduces to $\chi^{reg}(1)=\sum_{i=0}^n\binom{n}{i}^2i!$ when $\mu=(1^n)$, which gives the dimension of $R_{n}(q)$.

\appendix
\section{A table of $\chi^{\la}_{\mu}(q)$}\label{app}

In \cite{DHP}, a table of $\chi^{\la}_{\mu}(q)$ $(|\mu|\leq4)$ was presented. Here we list $\chi^{\la}_{\mu}(q)$ with $|\mu|=5$. Since $\chi^{\la}_{\mu}(q)$ equals to the irreducible character of the Hecke algebra when $|\la|=|\mu|$ (see \cite{Ram}), we just list those with $|\mu|=5$ and $|\la|<5$.
\begin{table}[H]

 \centering

\caption{\label{tab:3}}

\begin{tabular}{|c|c|c|c|c|c|c|c|}

  \hline

 \tabincell{c}{$\lambda\backslash \mu$ } & $(1^5)$ & $(21^3)$ &$(2^21)$ & $(31^2)$ & $(32)$ & $(41)$ & $(5)$\\
%
%  \hline
%
%$(1^5)$ & \tabincell{c}{$1$} & \tabincell{c}{$-1$} & \tabincell{c}{$1$} & \tabincell{c}{$1$} & \tabincell{c}{$-1$} & \tabincell{c}{$-1$} & \tabincell{c}{$1$}\\
%
%\hline
%
%$(21^3)$   & \tabincell{c}{$4$}    & \tabincell{c}{$q-3$}   & \tabincell{c}{$2-2q$}  & \tabincell{c}{$2-q$} & \tabincell{c}{$2q-1$} & \tabincell{c}{$q-1$} & \tabincell{c}{$-q$}\\
%
%  \hline
%
%$(2^21)$ & \tabincell{c}{$5$} &\tabincell{c}{$2q-3$} & \tabincell{c}{$q^2-2q+2$} & \tabincell{c}{$1-2q$} & \tabincell{c}{$-q^2+q-1$} & \tabincell{c}{$q$} & \tabincell{c}{$0$} \\
%
%\hline
%
%$(31^2)$ & \tabincell{c}{$6$} & \tabincell{c}{$3q-3$} & \tabincell{c}{$q^2-4q+1$} & \tabincell{c}{$q^2-2q+1$} & \tabincell{c}{$-2q^2+2q$} & \tabincell{c}{$-q^2+q$} & \tabincell{c}{$q^2$}\\
%
%\hline
%
%$(32)$ & \tabincell{c}{$5$} & \tabincell{c}{$3q-2$} & \tabincell{c}{$2q^2-2q+1$} & \tabincell{c}{$q^2-2q$} & \tabincell{c}{$q^3-q^2+q$} & \tabincell{c}{$-q^2$} & 0\\
%
%\hline
%
%$(41)$ & \tabincell{c}{$4$} & \tabincell{c}{$3q-1$} & \tabincell{c}{$2q^2-2q$} & \tabincell{c}{$2q^2-q$} & \tabincell{c}{$q^3-2q^2$} & \tabincell{c}{$q^3-q^2$} & \tabincell{c}{$-q^3$}\\
%
%\hline
%
%$(5)$ & \tabincell{c}{$1$} & \tabincell{c}{$q$} & \tabincell{c}{$q^2$} & \tabincell{c}{$q^2$} & \tabincell{c}{$q^3$} & \tabincell{c}{$q^3$} & \tabincell{c}{$q^4$}\\
%
\hline

$(1^4)$ & \tabincell{c}{$5$} & \tabincell{c}{$q-4$} & \tabincell{c}{$-2q+3$} & \tabincell{c}{$-q+3$} & \tabincell{c}{$2(q-1)$} & \tabincell{c}{$q-2$} & \tabincell{c}{$1-q$}\\

\hline

$(21^2)$ & \tabincell{c}{$15$} & \tabincell{c}{$3(2q-3)$} & \tabincell{c}{$2q^2-8q+5$} & \tabincell{c}{$q^2-5q+4$} & \tabincell{c}{$-3q^2+5q-2$} & \tabincell{c}{$-q^2+3q-1$} & \tabincell{c}{$q^2-q$}\\

\hline

$(2^2)$ & \tabincell{c}{$10$} & \tabincell{c}{$5(q-1)$} & \tabincell{c}{$3q^2-4q+3$} & \tabincell{c}{$q^2-4q+1$} & \tabincell{c}{$q^3-2q^2+2q-1$} & \tabincell{c}{$-q^2+q$} & 0\\

\hline

$(31)$ & \tabincell{c}{$15$} & \tabincell{c}{$3(3q-2)$} & \tabincell{c}{$5q^2-8q+2$} & \tabincell{c}{$4q^2-5q+1$} & \tabincell{c}{$2q^3-5q^2+3q$} & \tabincell{c}{$q^3-3q^2+q$} & \tabincell{c}{$-q^3+q^2$}\\

\hline

$(4)$ & \tabincell{c}{$5$} & \tabincell{c}{$4q-1$} & \tabincell{c}{$3q^2-2q$} & \tabincell{c}{$3q^2-q$} & \tabincell{c}{$2q^3-2q^2$} & \tabincell{c}{$2q^3-q^2$} & \tabincell{c}{$q^4-q^3$}\\

\hline

$(1^3)$ & \tabincell{c}{$10$} & \tabincell{c}{$2(2q-3)$} & \tabincell{c}{$q^2-6q+3$} & \tabincell{c}{$q^2-3q+3$} & \tabincell{c}{$-2q^2+4q-1$} & \tabincell{c}{$-q^2+2q-1$} & \tabincell{c}{$q^2-q$}\\

\hline

$(21)$ & \tabincell{c}{$20$} & \tabincell{c}{$11q-9$} & \tabincell{c}{$6q^2-10q+4$} & \tabincell{c}{$4q^2-7q+2$} & \tabincell{c}{$2q^3-6q^2+4q-1$} & \tabincell{c}{$q^3-3q^2+2q$} & \tabincell{c}{$-q^3+q^2$}\\

\hline

$(3)$ & \tabincell{c}{$10$} & \tabincell{c}{$7q-3$} & \tabincell{c}{$5q^2-4q+1$} & \tabincell{c}{$4q^2-3q$} & \tabincell{c}{$3q^3-3q^2+q$} & \tabincell{c}{$2q^3-2q^2$} & \tabincell{c}{$q^4-q^3$}\\

\hline

$(1^2)$ & \tabincell{c}{$10$} & \tabincell{c}{$2(3q-2)$} & \tabincell{c}{$3q^2-6q+1$} & \tabincell{c}{$3q^2-3q+1$} & \tabincell{c}{$q^3-4q^2+2q$} & \tabincell{c}{$q^3-2q^2+q$} & \tabincell{c}{$-q^3+q^2$}\\

\hline

$(2)$ & \tabincell{c}{$10$} & \tabincell{c}{$7q-3$} & \tabincell{c}{$5q^2-4q+1$} & \tabincell{c}{$4q^2-3q$} & \tabincell{c}{$3q^3-3q^2+q$} & \tabincell{c}{$2q^3-2q^2$} & \tabincell{c}{$q^4-q^3$}\\

\hline

$(1)$ & \tabincell{c}{$5$} & \tabincell{c}{$4q-1$} & \tabincell{c}{$3q^2-2q$} & \tabincell{c}{$3q^2-q$} & \tabincell{c}{$2q^3-2q^2$} & \tabincell{c}{$2q^3-q^2$} & \tabincell{c}{$q^4-q^3$}\\

\hline

$\emptyset$ & \tabincell{c}{$1$} & \tabincell{c}{$q$} & \tabincell{c}{$q^2$} & \tabincell{c}{$q^2$} & \tabincell{c}{$q^3$} & \tabincell{c}{$q^3$} & \tabincell{c}{$q^4$}\\

\hline

 \end{tabular}

\end{table}

\vskip30pt \centerline{\bf Acknowledgments}
The paper is partially supported by Simons Foundation grant No. 523868 and NSFC grant No. 12171303. The third author is supported by China Scholarship Council
and he would like to thank the hospitality of the Faculty of Mathematics at the University of Vienna during the writing of this paper.
\bigskip

%\bigskip

\bibliographystyle{plain}

\end{document}